\newcommand\frg{\ensuremath{\mathfrak{g}}} \newcommand{\g}{\frg}
\newcommand\frh{\ensuremath{\mathfrak{h}}}
\newcommand\lsl{\ensuremath{\mathfrak{sl}}}
\newcommand\NN{\mathbb{N}}
\newcommand\QQ{\mathbb{Q}}
\newcommand\ZZ{\mathbb{Z}}
\newcommand\al{\alpha}
\newcommand\la{\lambda}
\newcommand\om{\omega} 
\newcommand\lan{\langle} \newcommand\ran{\rangle}
\newcommand\ch{\sp{\scriptscriptstyle\vee}}
\newcommand{\CH}{\operatorname{CH}}
\DeclareMathOperator{\Span}{Span} 
\DeclareMathOperator{\tr}{tr} 
\DeclareMathOperator{\rk}{rank}
\swapnumbers \theoremstyle{plain}
\newtheorem{thm}{Theorem}[section]
\newtheorem{lem}[thm]{Lemma}
\newtheorem{cor}[thm]{Corollary}
\newtheorem{prop}[thm]{Proposition}
\theoremstyle{definition}
\newtheorem{ex}[thm]{Example}
\newtheorem{rem}[thm]{Remark}
\newtheorem{dfn}[thm]{Definition}
\newcommand{\zz}{\mathbb{Z}}   
\newcommand{\cc}{\mathfrak{c}} 
\newcommand{\Lb}{\mathcal{L}}   
\title[Polynomial invariants and fundamental representations]{Basic polynomial invariants, fundamental representations and the Chern class map}
\author{S.~Baek}
\author{E.~Neher}
\author{K.~Zainoulline}
\address{Sanghoon Baek, Department of Mathematics and Statistics,
University of Ottawa}
\email{sbaek@uottawa.ca}
\address{Erhard Neher, Department of Mathematics and Statistics,
University of Ottawa}
\email{erhard.neher@uottawa.ca}
\address{Kirill Zainoulline, Department of Mathematics and Statistics,
University of Ottawa,
         585 King Edward, Ottawa ON K1N 6N5, Canada}
\email{kirill@uottawa.ca}
\subjclass[2000]{Primary 13A50; Secondary 14L24}
\keywords{Dynkin index, polynomial invariant, fundamental representation, invariant theory, Chern class map, finite reflection group}
\begin{document}

\maketitle



\section*{Introduction}

Consider a crystallographic root system $\Phi$ together with its Weyl group $W$ acting on the weight lattice $\Lambda$ of $\Phi$. Let $\ZZ[\Lambda]^W$ and $S^*(\Lambda)^W$ be the $W$-invariant subrings of the integral group ring
$\ZZ[\Lambda]$ and the symmetric algebra $S^*(\Lambda)$.
A celebrated theorem of Chevalley says that $\ZZ[\Lambda]^W$ is a polynomial
ring over $\ZZ$ in classes of fundamental representations $\rho_1,\ldots,\rho_n$
and $S^*(\Lambda)^{W}\otimes\mathbb{Q}$ is a polynomial ring over $\mathbb{Q}$ in basic polynomial invariants
$q_1,\ldots,q_n$, where $n=\rk(\Phi)$.

\smallbreak

In the present paper we establish and investigate the relationship between $\rho_i$'s
and $q_i$'s. To do this we introduce an equivariant analogue of the Chern class map $\phi_i$ that
provides an isomorphism between the truncated rings $\ZZ[\Lambda]/I_m^j$ and $S^*(\Lambda)/I_a^j$ modulo
powers of the respective augmentation ideals. This allows us
to express basic polynomial invariants in terms of fundamental representations and vice versa, hence relating
the geometry of the variety of Borel subgroups $X$ with representation theory of the respective Lie algebra $\g$.

\smallbreak

A multiple of $\phi_i$ restricted to the respective cohomology ($K_0$ and $CH^*$) of $X$
gives the classical Chern class map $c_i\colon K_0(X)\to CH^i(X)$.
This geomeric interpretation provides a powerful tool to compute the annihilators of the torsion of the Grothendieck
$\gamma$-filtration on $K_0$ of twisted forms of $X$ as well as a tool to estimate the torsion part of its Chow groups in small codimensions.

\smallbreak

The paper is organized as follows. In the first section we introduce the $I$-adic filtrations
on $\ZZ[\Lambda]$ and $S^*(\Lambda)$ together with an isomorphism $\phi_i$ on their truncations.
Then we study the subrings of invariants and introduce the key notion of an exponent $\tau_i$
of a $W$-action on a free abelian group $\Lambda$. Roughly speaking, the integers $\tau_i$ measure how far is the ring $S^*(\Lambda)^W$ (with integer coefficients)
from being a polynomial ring in $q_i$'s. In section 5 we compute all the exponents up to degree 4 and
show that they all coincide with the Dynkin index of the Lie algebra $\g$.
In section 6 we apply the obtained results to estimate the torsion in Grothendieck $\gamma$-filtration
of some twisted flag varieties. In section 7 we compute the second exponent $\tau_2$ for
a non-crystallographic group $H_2$. 
\smallbreak

\paragraph{\bf Acknowledgments.} The first author has been partially supported from the NSERC grants of the other two authors and from the Fields Institute. The second author gratefully acknowledges support through NSERC Discovery grant 8836-20121. The last author has been supported by the NSERC Discovery grant 385795-2010, Accelerator Supplement 396100-2010 and an Early Researcher Award (Ontario).


\section{Two filtrations}

Consider the two covariant functors $S^*(-)$ and $\ZZ[-]$
from the category of abelian groups to the category of commutative rings
$$
S^*(-)\colon \Lambda\mapsto S^*(\Lambda)
\text{ and }
 \ZZ[-]\colon \Lambda\mapsto  \ZZ[\Lambda]
$$
given by taking the symmetric algebra
of an abelian group $\Lambda$ and
the integral group ring of $\Lambda$ respectively.
The $i$th graded component $S^i(\Lambda)$
is additively generated by monomials
$\lambda_1\lambda_2\ldots \lambda_i$
with $\lambda_j \in \Lambda$ and
the ring $ \ZZ[\Lambda]$ is additively generated
by exponents $e^\lambda$, $\lambda\in \Lambda$.

\smallbreak

The trivial group homomorphism induces the ring homomorphisms
$$
\epsilon_a\colon S^*(\Lambda)\to  \ZZ
\text{ and }
\epsilon_m\colon \ZZ[\Lambda]\to  \ZZ
$$
called the augmentation maps.
By definition $\epsilon_a$ sends every element of positive degree to $0$ and
$\epsilon_m$ sends every $e^\lambda$ to $1$.
Let $I_a$ and $I_m$ denote the kernels
of $\epsilon_a$ and $\epsilon_m$ respectively.
Observe that $I_a=S^{>0}(\Lambda)$ consists of elements of positive degree
and $I_m$ is generated by differences $(1-e^{-\lambda})$, $\lambda\in \Lambda$.
Consider the respective $I$-adic filtrations:
$$
S^*(\Lambda)=I_a^0\supseteq I_a \supseteq I_a^2\supseteq\ldots
\text{ and }
\ZZ[\Lambda]=I_m^0\supseteq I_m \supseteq I_m^2\supseteq\ldots
$$
and let
$$
gr_a^*(\Lambda)=\bigoplus_{i\ge 0} I_a^{i}/I_a^{i+1}
\text{ and }
gr_m^*(\Lambda)=\bigoplus_{i\ge 0}I_m^i/I_m^{i+1}
$$
denote the associated graded rings.
Observe that  $gr_a^*(\Lambda)= S^*(\Lambda)$.

\begin{ex}\label{onevar}
If $\Lambda\simeq  \ZZ$,
then the ring $S^*(\Lambda)$
can be identified with the polynomial ring
in one variable $ \ZZ[\omega]$,
where $\omega$ is a generator of $\Lambda$
and the ring $ \ZZ[\Lambda]$
can be identified with the Laurent polynomial ring
$ \ZZ[x,x^{-1}]$ where $x=e^\omega$.
The augmentations $\epsilon_a$ and $\epsilon_m$ are given by
$$
\epsilon_a\colon \omega\mapsto 0
\text{ and }
\epsilon_m\colon x\mapsto 1.
$$
We have
$I_a=(\omega)$ and
$I_m$ is additively generated by differences $(1-x^n)$, $n\in  \ZZ$.

\smallbreak

Note that the rings $ \ZZ[\omega]$ and $ \ZZ[x,x^{-1}]$ are not isomorphic,
however they become isomorphic after the truncation.
Namely for every $i\ge 0$ there is ring isomorphism
$$
\phi_i\colon   \ZZ[x,x^{-1}]/I_m^{i+1}
\stackrel{\simeq}\to  \ZZ[\omega]/I_a^{i+1}
$$
defined by
$\phi_i\colon x\mapsto (1-\omega)^{-1}=1+\omega+\ldots +\omega^i$
with the inverse defined by $\phi_i^{-1}\colon \omega \mapsto 1-x^{-1}$.
It is useful to keep the following picture in mind
$$
\xymatrix{
\ZZ[x,x^{-1}] \ar@{>>}[d] \ar[rd] & \ZZ[\omega] \ar@{>>}[d] \ar[l] \\
\ZZ[x,x^{-1}]/I_m^{i+1} \ar[r]^{\phi_i}_{\simeq} & \ZZ[\om]/I_a^{i+1}
}
$$
observing that the inverse $\phi_i^{-1}$
can be lifted to the map $\ZZ[\omega] \to \ZZ[x,x^{-1}]$
but $\phi_i$ can't.
\end{ex}

The example can be generalized as follows:

\begin{lem}\cite[2.1]{GaZa}
Assume that $\Lambda$ is a free abelian group of finite rank $n$. The rings $ \ZZ[\Lambda]$ and $S^*(\Lambda)$
become isomorphic after truncation.
Namely, if $\{\omega_1,\ldots,\omega_n\}$
is a $ \ZZ$-basis of $\Lambda$,
then for every $i\ge 0$ there is a ring isomorphism
$$
\phi_i\colon \ZZ[\Lambda]/I_m^{i+1}
\stackrel{\simeq}\to S^*(\Lambda)/I_a^{i+1}
$$
defined by $\phi_i(1)=1$ and
$$
\phi_i(e^{\sum_{j=1}^n
a_j\omega_j})= \prod_{j=1}^n(1-\omega_j)^{-a_j}
$$
with the inverse defined by $\phi_i^{-1}(\omega_j)=1-e^{-\omega_j}$.
\end{lem}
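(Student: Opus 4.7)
The plan is to construct $\phi_i$ and its inverse by first producing lifts from the full rings and then showing these lifts kill the appropriate powers of the augmentation ideals, so that they descend to the truncations.

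First I would build the map $\phi_i$. Because $\ZZ[\Lambda]$ is the group ring on the free abelian group $\Lambda$, a ring homomorphism $\tilde\phi_i \colon \ZZ[\Lambda] \to S^*(\Lambda)/I_a^{i+1}$ is determined by a group homomorphism $\Lambda \to (S^*(\Lambda)/I_a^{i+1})^\times$. Since every $\omega_j$ lies in $I_a$ and hence is nilpotent modulo $I_a^{i+1}$, the element $1-\omega_j$ is a unit with explicit inverse $1+\omega_j+\cdots+\omega_j^{i}$. Setting $e^{\omega_j} \mapsto (1-\omega_j)^{-1}$ on the basis and extending multiplicatively to all of $\Lambda$ defines $\tilde\phi_i$, and the value at a general element $\sum a_j\omega_j$ agrees with the formula in the statement. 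The key verification is that $\tilde\phi_i$ sends $I_m^{i+1}$ to zero. For this, observe that $I_m$ is generated as an ideal by the elements $e^{\omega_j}-1$, and
\[
\tilde\phi_i(e^{\omega_j}-1)=(1-\omega_j)^{-1}-1=(1-\omega_j)^{-1}\omega_j\in I_a/I_a^{i+1}.
\]
Consequently $\tilde\phi_i(I_m^{i+1})\subseteq I_a^{i+1}/I_a^{i+1}=0$, so $\tilde\phi_i$ descends to the desired $\phi_i$.

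Next I would construct a two-sided inverse $\psi_i$. By the universal property of the symmetric algebra, a ring homomorphism $\tilde\psi_i\colon S^*(\Lambda)\to \ZZ[\Lambda]/I_m^{i+1}$ is determined by a group homomorphism $\Lambda\to \ZZ[\Lambda]/I_m^{i+1}$, and I take the map $\omega_j\mapsto 1-e^{-\omega_j}$ extended linearly. Since each $1-e^{-\omega_j}$ lies in $I_m$, the image of $I_a$ lies in $I_m/I_m^{i+1}$, so the image of $I_a^{i+1}$ is zero and $\tilde\psi_i$ descends to $\psi_i\colon S^*(\Lambda)/I_a^{i+1}\to\ZZ[\Lambda]/I_m^{i+1}$.

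Finally I would check that $\phi_i$ and $\psi_i$ are mutually inverse by evaluating on generators. One computes $\phi_i(1-e^{-\omega_j})=1-(1-\omega_j)=\omega_j$, giving $\phi_i\circ\psi_i=\mathrm{id}$ on the generators $\omega_j$ and hence everywhere. Conversely, setting $y_j=1-e^{-\omega_j}$, the relation $e^{\omega_j}=(1-y_j)^{-1}=1+y_j+\cdots+y_j^{\,i}$ holds modulo $I_m^{i+1}$, which is precisely $\psi_i((1-\omega_j)^{-1})=e^{\omega_j}$, so $\psi_i\circ\phi_i=\mathrm{id}$. The mildly delicate step is the first one: making sure that $(1-\omega_j)^{-1}$ can be used as the image of a group-like element. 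This rests on the observation that $I_a$ becomes a nilpotent ideal after truncation, which is what makes the noninvertible formula $x=1/(1-\omega)$ of Example~\ref{onevar} meaningful and prevents $\phi_i$ from lifting to the untruncated rings.
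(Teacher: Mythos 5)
Your proof is correct. The paper does not actually supply an argument for this lemma — it is quoted from \cite[2.1]{GaZa} — but the route you take (define $\phi_i$ and $\psi_i$ on the untruncated rings via the universal properties of $\ZZ[\Lambda]$ and $S^*(\Lambda)$, use nilpotence of the augmentation ideals modulo $I_m^{i+1}$ and $I_a^{i+1}$ to make the images units and to see that the maps descend, then check mutual inversion on the generators $e^{\pm\omega_j}$ and $\omega_j$) is exactly the natural generalization of the rank-one sketch in Example~\ref{onevar} and is the standard proof of this statement.
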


Note that the map $\phi_i$ preserves the $I$-adic filtrations.
Indeed, by definition $\phi_i(I_m^j)\subseteq I_a^j$ for every $0\le j\le i$.
Moreover, we have the following

\begin{lem}\label{canmap}
The isomorphism $\phi_i$ restricted to the subsequent quotients $I_m^i/I_m^{i+1}$
doesn't depend on the choice of a basis of $\Lambda$.
Hence, there is an induced canonical isomorphism of graded rings
$$
\phi_*=\oplus_{i\ge 0}\phi_i \;\colon gr_m^*(\Lambda)
\stackrel{\simeq}\longrightarrow gr_a^*(\Lambda)=S^*(\Lambda).
$$
\end{lem}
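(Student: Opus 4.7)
The plan is to exhibit a manifestly basis-independent formula for the restriction of $\phi_i$ to the subquotient $I_m^i/I_m^{i+1}$, by evaluating $\phi_i$ on a natural generating set of $I_m^i$.

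First, I would observe that $I_m$ is additively generated by the elements $1-e^{-\lambda}$ with $\lambda\in\Lambda$ (as already noted just after the definition of $\epsilon_m$). Consequently, $I_m^i$ is generated as an abelian group by the $i$-fold products
$$
(1-e^{-\lambda_1})(1-e^{-\lambda_2})\cdots(1-e^{-\lambda_i}),\qquad \lambda_1,\ldots,\lambda_i\in\Lambda.
$$
It therefore suffices to show that $\phi_i$ sends such a product to something depending only on the $\lambda_k$'s (and not on the chosen basis) modulo $I_a^{i+1}$.

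Next, I would compute $\phi_i(1-e^{-\lambda})\bmod I_a^2$. Writing $\lambda=\sum_{j=1}^n a_j\omega_j$ in the chosen basis, the previous lemma gives
$$
\phi_i\bigl(1-e^{-\lambda}\bigr)\;=\;1-\prod_{j=1}^n (1-\omega_j)^{a_j}.
$$
Expanding each factor modulo $I_a^2$ by $(1-\omega_j)^{a_j}\equiv 1-a_j\omega_j$ and multiplying out, the right-hand side becomes $\sum_j a_j\omega_j=\lambda$ modulo $I_a^2$. Thus $\phi_i(1-e^{-\lambda})\equiv\lambda\pmod{I_a^2}$, and since $\phi_i$ is a ring homomorphism,
$$
\phi_i\Bigl(\prod_{k=1}^i(1-e^{-\lambda_k})\Bigr)\;\equiv\;\prod_{k=1}^i \lambda_k\pmod{I_a^{i+1}}.
$$
The right-hand side makes no reference to any basis of $\Lambda$, so the restriction $\phi_i\colon I_m^i/I_m^{i+1}\to I_a^i/I_a^{i+1}=S^i(\Lambda)$ is independent of the chosen basis.

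Finally, for the second assertion I would assemble the maps $\phi_i|_{I_m^i/I_m^{i+1}}$ into $\phi_*=\oplus_{i\ge0}\phi_i$. Multiplicativity of $\phi_*$ follows from the multiplicativity of each $\phi_i$, and bijectivity follows because each $\phi_i$ is an isomorphism on truncations and hence induces an isomorphism on the associated graded pieces $I_m^i/I_m^{i+1}\to I_a^i/I_a^{i+1}$. The identification $gr_a^*(\Lambda)=S^*(\Lambda)$ was already noted.

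The main (and essentially only) obstacle is the basis-dependent looking computation of $\phi_i(1-e^{-\lambda})$; the point is that after reduction modulo $I_a^2$ all basis-dependent cross terms die, leaving the intrinsic expression $\lambda$. Everything else is formal.
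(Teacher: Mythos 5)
Your proof is correct, and it is the mirror image of the paper's argument. The paper works in the opposite direction: it defines a map $I_a^i/I_a^{i+1}\to I_m^i/I_m^{i+1}$ by the manifestly basis-free rule $\lambda_1\cdots\lambda_i\mapsto(1-e^{-\lambda_1})\cdots(1-e^{-\lambda_i})$, checks that this is well-defined using $(1-e^{-\lambda-\lambda'})\equiv(1-e^{-\lambda})+(1-e^{-\lambda'})\pmod{I_m^2}$, and identifies it with $\phi_i^{-1}$. You instead evaluate the forward map $\phi_i$ on the additive generators $(1-e^{-\lambda_1})\cdots(1-e^{-\lambda_i})$ of $I_m^i/I_m^{i+1}$ and observe that the basis drops out of the answer $\lambda_1\cdots\lambda_i$. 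The extra step in your version is the binomial expansion showing $\phi_i(1-e^{-\lambda})\equiv\lambda\pmod{I_a^2}$; in exchange you avoid the well-definedness check, which the paper needs because it prescribes a map by a formula on a non-free generating set of $S^i(\Lambda)$. Both routes rest on the same basis-free correspondence $1-e^{-\lambda}\leftrightarrow\lambda$ modulo higher filtration and are essentially equivalent in length and difficulty.
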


\begin{proof}
Indeed, in this case we can define the inverse
$\phi_i^{-1}\colon I_a^i/I_a^{i+1} \to I_m^i/I_m^{i+1}$
by
$$
\phi_i^{-1}(\lambda_1\lambda_2\ldots\lambda_i)=
(1-e^{-\lambda_1})(1-e^{-\lambda_2})\ldots (1-e^{-\lambda_i}).
$$
It is well-defined since
$(1-e^{-\lambda-\lambda'})=(1-e^{-\lambda})+ (1-e^{-\lambda'})$
modulo $I_m^2$.
\end{proof}

Consider the composite of the map $\phi_i$ with the projections
$$
\phi^{(i)}\colon\zz[\Lambda] \to \ZZ[\Lambda]/I_m^{i+1}
\stackrel{\phi_i}\longrightarrow
S^*(\Lambda)/I_a^{i+1} \to S^i(\Lambda).
$$
The map $\phi^{(i)}$, and therefore $\phi_i$,
can be computed on generators
$e^\lambda$, $\lambda\in \Lambda$ as follows:

\smallbreak

Let $f(z)=\prod_j(1-\omega_j z)^{-a_j}$,
where $\lambda=\sum_j a_j\omega_j$.
Then
$$
\phi^{(i)}(e^{\sum_j a_j\omega_j})=
\frac{1}{i!}\frac{d^i f(z)}{d z^i}\Big|_{z=0}
$$
To compute the derivatives of $f(z)$ we observe that $f'(z)=f(z)g(z)$,
where $g(z)=\sum_j a_j\omega_j (1-\omega_j z)^{-1}$ and
$
\frac{d^i\, g(z)}{d\, z^i}=
\sum_{j}\frac{i!\,a_{j}\omega_{j}^{i+1}}{(1-\omega_{j}z)^{i+1}}.
$
Hence, starting with $g_0=1$ we obtain the following recursive formulas
$$
\frac{d^i\, f(z)}{d\, z^i}=f(z)\cdot g_{i}(z),
\text{ where }
g_i(z)=g(z)g_{i-1}(z)+g_{i-1}'(z).
$$

\begin{ex}\label{formulas}
For small values of $i$ we obtain

\smallbreak

\begin{tabular}{r|l}
$i$ & $i!\cdot \phi^{(i)}(e^\lambda)=$ \\ \hline
1 & $\lambda$ \\
2 & $\lambda^2 + \lambda(2) $ \\
3 & $\lambda^3 + 3\lambda(2)\lambda+2\lambda(3)$ \\
4 & $\lambda^4 + 6\lambda(4) + 6\lambda(2)\lambda^2+8\lambda(3)\lambda+3\lambda(2)^2$
\end{tabular}

\smallbreak

\noindent
where given a presentation
$\lambda=\sum_{j=1}^n a_{j,\lambda}\omega_j$, $a_{j,\lambda}\in \zz$ in terms
of the basis $\{\om_1,\om_2,\ldots.\om_n\}$, the character $\lambda(m)$, $m\ge 1$ is defined by
$$
\lambda(m) = \sum_{j=1}^n a_{j,\lambda}\omega_j^m.
$$
\end{ex}


\section{Invariants and exponents}

Let $W$ be a finite group which acts on a free abelian group $\Lambda$ of finite rank
by $\zz$-linear automorphisms.
Consider the induced action of $W$ on $\zz[\Lambda]$ and $S^*(\Lambda)$.
Observe that it is compatible with the $I$-adic filtrations,
i.e. $W(I_m^i)\subseteq I_m^i$ and
$W(I_a^i)\subseteq I_a^i$ for every $i\ge 0$.

\smallbreak

Note that the isomorphisms $\phi_i$ and $\phi_i^{-1}$
are not necessarily $W$-equivariant.
However, by Lemma~\ref{canmap}
their restrictions to the subsequent quotients $I_m^i/I_m^{i+1}$
and $I_a^i/I_a^{i+1}=S^i(\Lambda)$ are $W$-equivariant
and we have
$$
(I_m^i/I_m^{i+1})^W\simeq (I_a^i/I_a^{i+1})^W.
$$

\smallbreak

Let $I_m^W$ denote the ideal of $\zz[\Lambda]$
generated by $W$-invariant elements
from the augmentation ideal $I_m$,
i.e., by elements from $\zz[\Lambda]^W\cap I_m$.
Similarly, let $I_a^W$ denote the ideal of $S^*(\Lambda)$
generated by $W$-invariant elements from $I_a$,
i.e., by elements from $S^*(\Lambda)^W\cap I_a$.

\smallbreak

For each $\chi\in \Lambda$
let $\rho(\chi)=\sum_{\lambda\in W(\chi)} e^\lambda$
denote the sum over all elements of the $W$-orbit of $\chi$.
Every element in $I_m^W$ can be written as
a finite linear combination with integer coefficients
of the elements $\hat\rho(\chi)=\rho(\chi)-\epsilon_m(\rho(\chi))$, $\chi\in \Lambda$. Therefore, the ideal $I_m^W$ is generated by the  elements $\hat \rho(\chi)$, i.e.,
$$
I_m^W=\langle \hat\rho(\chi)\mid \chi\in \Lambda\rangle.
$$

\smallbreak

The image of $I_m^W$ by means of the composite
$$
\zz[\Lambda]\to \zz[\Lambda]/I_m^{i+1}
\stackrel{\phi_i}\longrightarrow
S^*(\Lambda)/I_a^{i+1}\, .
$$
is an ideal in $S^*(\Lambda)/I_a^{i+1}$
generated by the elements $\phi_i(\hat \rho(\chi))$, $\chi\in \Lambda$. Therefore, the image of $I_m^W$ in $S^i(\Lambda)$
is the $i$th homogeneous component of the ideal
generated by $\phi^{(j)}(\hat \rho(\chi))$,
where $1\le j\le i$, $\chi \in \Lambda$, i.e.
$$
\phi^{(i)}(I_m^W)=
\langle f\cdot \phi^{(j)}(\hat \rho(\chi))
\mid 1\le j \le i,\; f\in S^{i-j}(\Lambda),\; \chi\in \Lambda\rangle_\zz\, .
$$

We are ready now to introduce the central notion of the present paper:

\begin{dfn}\label{conjecture}
We say that an action of $W$ on $\Lambda$
\emph{has finite exponent in degree $i$} if
there exists a non-zero integer $N_i$ such that
$$
N_i\cdot (I_a^W)^{(i)} \subseteq \phi^{(i)}(I_m^W),
$$
where $(I_a^W)^{(i)}=I_a^W\cap S^i(\Lambda)$.
In this case the g.c.d. of all such $N_i$s
will be called the \emph{$i$-th exponent} of the $W$-action
and will be denoted by $\tau_i$.
\end{dfn}

Observe that if $\phi^{(i)}(I_m^W)$ is a subgroup
of finite index in $(I_a^W)^{(i)}$,
then $\tau_i$ is simply the exponent of
$\phi^{(i)}(I_m^W)$ in $(I_a^W)^{(i)}$.
Note also that by the very definition
$\tau_0=1$ and $\tau_i\mid \tau_{i+1}$ for every $i\ge 0$.


\section{Essential actions}

In the present section we study $W$-actions
that have no $W$-invariant linear forms,
i.e. we assume that $\Lambda^W=0$.
In the theory of reflection groups such actions
are called {\em essential} (see \cite[V, \S3.7]{Bo} or \cite{Hum}).
Note that this immediately implies that $\tau_1=1$.

\begin{lem}\label{twlambda}
For every $\chi\in \Lambda$ and $m\in \NN_+$ we have
$\sum_{ \la \in W(\chi)} \la(m) = 0$.
\end{lem}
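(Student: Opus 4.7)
The plan is to reduce the statement to the hypothesis $\Lambda^W = 0$ by exploiting linearity of the map $\lambda \mapsto \lambda(m)$ in its argument $\lambda$.

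First I would observe that, for each fixed basis $\{\omega_1,\ldots,\omega_n\}$ of $\Lambda$ and each fixed $m\ge 1$, the assignment
\[
\psi_m\colon \Lambda \to S^m(\Lambda),\qquad \lambda=\sum_{j=1}^n a_{j,\lambda}\omega_j \mapsto \sum_{j=1}^n a_{j,\lambda}\omega_j^m
\]
is $\ZZ$-linear in $\lambda$: if $\lambda=\sum_j a_j\omega_j$ and $\mu=\sum_j b_j\omega_j$, then the coefficients of $\lambda+\mu$ in the basis are $a_j+b_j$, so $\psi_m(\lambda+\mu)=\psi_m(\lambda)+\psi_m(\mu)$. (Note that $\psi_m$ is not $W$-equivariant in general, but that is not needed here.)

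Second, I would apply this linearity to the orbit sum. Set
\[
s \;=\; \sum_{\lambda\in W(\chi)} \lambda \;\in\; \Lambda.
\]
The group $W$ permutes the finite orbit $W(\chi)$, so $w\cdot s = s$ for every $w\in W$; that is, $s\in\Lambda^W$. By the essentiality assumption $\Lambda^W=0$, we get $s=0$. Linearity of $\psi_m$ then yields
\[
\sum_{\lambda\in W(\chi)} \lambda(m) \;=\; \sum_{\lambda\in W(\chi)} \psi_m(\lambda) \;=\; \psi_m(s) \;=\; \psi_m(0) \;=\; 0,
\]
which is the desired identity.

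There is no real obstacle; the only point that needs a sentence of care is noting that although $\psi_m$ depends on the chosen basis and is not itself $W$-equivariant, its $\ZZ$-linearity in $\lambda$ is all that the argument uses, and the vanishing of the orbit sum $s$ is purely a consequence of $\Lambda^W=0$.
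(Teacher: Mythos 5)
Your proof is correct and follows essentially the same route as the paper's: both use the $\ZZ$-linearity of $\lambda\mapsto\lambda(m)$ together with the $W$-invariance of the orbit sum $\sum_{\lambda\in W(\chi)}\lambda$ (hence its vanishing under $\Lambda^W=0$). The paper phrases it by first deducing that each coefficient sum $\sum_\lambda a_{j,\lambda}$ vanishes and then substituting back, while you package the same computation as applying the linear map $\psi_m$ directly to $s=0$; this is a cosmetic reformulation, not a different argument.
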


\begin{proof}
Let $\om_1,\om_2,\ldots.\om_n$ be a $\zz$-basis of $\Lambda$. For $m\in \NN_+$ we have
$$
\sum_{\lambda \in W(\chi)} \lambda(m)
 =\sum_{\lambda \in W(\chi)}
       \big( \sum_{j=1}^n a_{j,\lambda}\omega_j^m \big)
= \sum_{j=1}^n \big( \sum_{\lambda \in W(\chi)}
         a_{j,\lambda} \big) \omega_j^m .
$$
In particular, for $m=1$ we obtain
$$
\sum_{\lambda \in W(\chi)} \lambda = \sum_{j=1}^n \big(
\sum_{\lambda \in W(\chi)}
         a_{j,\lambda} \big) \omega_i .
$$
Since $\Lambda^W=0$,
we have $\sum_{\lambda \in W(\chi)} \lambda=0$.
Since $\om_j$, $1\le j \le n$ are $\ZZ$-free,
we have $\sum_{\lambda \in W(\chi)}a_{j,\lambda} = 0$
for all $1\le j \le n$.
\end{proof}

\begin{cor}\label{phi2}
For every $\chi\in \Lambda$ we have
$$
\phi^{(2)}(\rho(\chi))=
\tfrac{1}{2}{\sum_{\lambda \in W(\chi)} \lambda^2}.
$$
In particular, the quadratic form $\phi^{(2)}(\rho(\chi))$ is $W$-invariant, i.e.
$$
\phi^{(2)}(\rho(\chi))\in S^2(\Lambda)^W.
$$
\end{cor}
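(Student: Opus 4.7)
The plan is to unwind the definition of $\phi^{(2)}$ using the explicit formula from Example~\ref{formulas} and then apply Lemma~\ref{twlambda} to kill one of the two resulting summands.

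First I would recall from Example~\ref{formulas} that for any single $\lambda\in\Lambda$ one has
$$2\,\phi^{(2)}(e^\lambda)=\lambda^2+\lambda(2).$$
Since $\phi^{(2)}$ is additive (being a composition of ring homomorphisms restricted to a graded piece), summing over the orbit gives
$$2\,\phi^{(2)}(\rho(\chi))=\sum_{\lambda\in W(\chi)}\lambda^2+\sum_{\lambda\in W(\chi)}\lambda(2).$$

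Next I would invoke Lemma~\ref{twlambda} with $m=2$, which states that $\sum_{\lambda\in W(\chi)}\lambda(2)=0$ under the essentiality assumption $\Lambda^W=0$. This eliminates the second sum and yields the desired equality $\phi^{(2)}(\rho(\chi))=\tfrac{1}{2}\sum_{\lambda\in W(\chi)}\lambda^2$. A small point to verify is that the right-hand side really lies in $S^2(\Lambda)$ (not just in $S^2(\Lambda)\otimes\tfrac{1}{2}\zz$); this is automatic because $\lambda^2+\lambda(2)=2\sum_{j<k}a_{j,\lambda}a_{k,\lambda}\om_j\om_k+2\sum_j\binom{a_{j,\lambda}+1}{2}\om_j^2$ is visibly divisible by $2$ in $S^2(\Lambda)$ term by term, or alternatively because both $\phi^{(2)}(\rho(\chi))$ and $\sum_\lambda\lambda^2$ lie in $S^2(\Lambda)$ and we have shown their difference is $\tfrac{1}{2}\sum_\lambda\lambda(2)=0$.

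Finally, for $W$-invariance, since $W$ permutes the orbit $W(\chi)$ and acts on $S^2(\Lambda)$ by ring automorphisms, the sum $\sum_{\lambda\in W(\chi)}\lambda^2$ is manifestly fixed by $W$, so $\phi^{(2)}(\rho(\chi))\in S^2(\Lambda)^W$.

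There is no real obstacle here: the proof is essentially a one-line combination of the degree-$2$ row of the table in Example~\ref{formulas} with the $m=2$ case of Lemma~\ref{twlambda}, plus the obvious observation that symmetric sums over a $W$-orbit are $W$-invariant.
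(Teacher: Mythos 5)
Your proof is correct and is essentially identical to the paper's: both expand $\phi^{(2)}(\rho(\chi))$ via the degree-$2$ formula $2\phi^{(2)}(e^\lambda)=\lambda^2+\lambda(2)$ from Example~\ref{formulas}, sum over the orbit, and invoke Lemma~\ref{twlambda} with $m=2$ to cancel the $\sum_\lambda\lambda(2)$ term. The extra remarks on integrality and the explicit observation that the orbit sum is $W$-fixed are fine but not new content beyond what the paper leaves implicit.
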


\begin{proof}
By the formula for $\phi^{(2)}$ in Example \ref{formulas} and by Lemma~\ref{twlambda}
we obtain that
$$
\phi^{(2)} \big( \sum_{\lambda\in W(\chi)}e^\lambda \big)=
\tfrac{1}{2} \sum_{\lambda\in W(\chi)} (\lambda^2+ \lambda(2))=
\tfrac{1}{2}{\sum_{\lambda \in W(\chi)} \lambda^2}.
\qedhere
$$
\end{proof}

\begin{cor}\label{phi2cor}
If $S^2(\Lambda)^W=\langle q\rangle$ for some $q$,
then $\phi^{(2)}(I_m^W)$ is a subgroup of finite index in $(I_a^W)^{(2)}$.
\end{cor}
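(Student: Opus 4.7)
The plan is to identify both $(I_a^W)^{(2)}$ and $\phi^{(2)}(I_m^W)$ explicitly as subgroups of $S^2(\Lambda)^W = \ZZ q$, and then reduce the finite-index claim to showing nonvanishing in $\ZZ$. First I would use the hypothesis of this section that the action is essential, i.e.\ $\Lambda^W = 0$, together with the description of $I_a^W$ as the ideal of $S^*(\Lambda)$ generated by positive-degree $W$-invariants. Since $S^1(\Lambda)^W = \Lambda^W = 0$, the only contribution to $(I_a^W)^{(2)} = I_a^W \cap S^2(\Lambda)$ comes from integer multiples of degree-$2$ $W$-invariants, so by hypothesis $(I_a^W)^{(2)} = \ZZ q$.

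Next I would analyze $\phi^{(2)}(I_m^W)$ via the description displayed just before Definition~\ref{conjecture}, as the subgroup generated by the elements $f\cdot \phi^{(j)}(\hat\rho(\chi))$ with $1\le j\le 2$, $f\in S^{2-j}(\Lambda)$, and $\chi \in \Lambda$. The $j=1$ terms reduce to $f\cdot \sum_{\lambda \in W(\chi)}\lambda$, which vanish by Lemma~\ref{twlambda}. Thus only $j=2$ contributions remain, and by Corollary~\ref{phi2} these are integral combinations of $\phi^{(2)}(\rho(\chi)) = \tfrac{1}{2}\sum_{\lambda \in W(\chi)}\lambda^2 \in S^2(\Lambda)^W$. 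In particular $\phi^{(2)}(I_m^W) \subseteq S^2(\Lambda)^W = \ZZ q = (I_a^W)^{(2)}$.

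It then remains to verify that $\phi^{(2)}(I_m^W) \ne 0$, since any nonzero subgroup of $\ZZ q \cong \ZZ$ automatically has finite index. The plan is to exhibit a single $\chi$ with $\phi^{(2)}(\rho(\chi))\ne 0$. Fixing a $\ZZ$-basis $\{\om_1,\ldots,\om_n\}$ of $\Lambda$ and writing $\lambda = \sum_j a_{j,\lambda}\om_j$, the coefficient of $\om_j^2$ in $\sum_{\lambda \in W(\chi)}\lambda^2$ is the non-negative integer $\sum_{\lambda \in W(\chi)} a_{j,\lambda}^2$; choosing any $\chi \ne 0$ makes at least one such coefficient strictly positive, forcing $\phi^{(2)}(\rho(\chi))\ne 0$ in $S^2(\Lambda)$. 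The whole argument is largely bookkeeping on results already established, and the only genuine point is this final sum-of-squares positivity observation that rules out the degenerate case $\phi^{(2)}(I_m^W)=0$.
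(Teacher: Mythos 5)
Your proof is correct and follows the same approach as the paper: use $\Lambda^W = 0$ to kill the degree-$1$ contributions, and identify $\phi^{(2)}(I_m^W)$ as the subgroup of $\ZZ q = (I_a^W)^{(2)}$ generated by the $W$-invariant quadratic forms $\phi^{(2)}(\rho(\chi))$. The one point you make explicit that the paper leaves implicit is the nonvanishing of $\phi^{(2)}(\rho(\chi))$ for $\chi \neq 0$ via the sum-of-squares observation on the $\omega_j^2$-coefficient; the paper just writes $\phi^{(2)}(\rho(\chi)) = N_\chi\, q$ and sets $\tau_2 = \gcd_\chi N_\chi$ without justifying that some $N_\chi$ is positive, so your extra paragraph is a worthwhile (if small) tightening of the argument.
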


\begin{proof}
The image of the ideal $I_m^W$ is generated by
$\phi^{(1)}(\rho(\chi))$ and $\phi^{(2)}(\rho(\chi))$.
Since $\Lambda^W=0$,
$\phi^{(1)}(\rho(\chi))=\sum_{\lambda\in W(\chi)} \lambda=0$
and by Corollary~\ref{phi2}, $\phi^{(2)}(I_m^W)$ is generated only by
the $W$-invariant quadratic forms $\phi^{(2)}(\rho(\chi))$.
For every $\chi\in \Lambda$ let
\begin{equation} \label{def:N}
\phi^{(2)}(\rho(\chi))=N_\chi \cdot q,\; N_\chi\in \NN.
\end{equation}
Then the subgroup $\phi^{(2)}(I_m^W)$
is a subgroup of $(I_a^W)^{(2)}$ of exponent
$$
\tau_2=\gcd_{\chi\in \Lambda} N_\chi.
\qedhere
$$
\end{proof}

We now investigate the invariants of degree $3$ and $4$.

\begin{lem}\label{phi3}
For every $\chi\in \Lambda$ we have
$$
\phi^{(3)}(\rho(\chi))=
\tfrac{1}{6} \sum_{\lambda \in W(\chi)} (\lambda^3 + 3\lambda(2)\lambda).
$$
\end{lem}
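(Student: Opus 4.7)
The plan is to read off the formula for $\phi^{(3)}(e^\lambda)$ directly from the $i=3$ row of the table in Example~\ref{formulas}, sum over the $W$-orbit of $\chi$, and then use Lemma~\ref{twlambda} with $m=3$ to kill the unwanted cubic-power term.

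More precisely, Example~\ref{formulas} gives
\[
6\,\phi^{(3)}(e^\lambda) = \lambda^3 + 3\lambda(2)\lambda + 2\lambda(3).
\]
Since $\phi^{(3)}$ is a $\zz$-linear map and $\rho(\chi) = \sum_{\lambda\in W(\chi)} e^\lambda$, summing this identity over the orbit yields
\[
6\,\phi^{(3)}(\rho(\chi)) = \sum_{\lambda\in W(\chi)} \big(\lambda^3 + 3\lambda(2)\lambda\big) + 2\sum_{\lambda\in W(\chi)} \lambda(3).
\]
By Lemma~\ref{twlambda} applied with $m=3$, the last sum vanishes, so dividing by $6$ gives the claimed expression.

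There is essentially no obstacle here: the statement is a direct corollary of the two preceding results, and the only thing to check is that the hypothesis of Lemma~\ref{twlambda} (namely $\Lambda^W=0$, i.e.\ the action is essential) is in force throughout the section, which is exactly the standing assumption declared at the start of Section~3.
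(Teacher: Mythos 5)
Your proof is correct and follows exactly the same route as the paper: read off the $i=3$ formula from Example~\ref{formulas}, sum over the orbit by linearity, and apply Lemma~\ref{twlambda} (with $m=3$) to drop the $\sum_{\lambda\in W(\chi)}\lambda(3)$ term. The remark about the standing hypothesis $\Lambda^W=0$ is apt but implicit in the paper's one-line argument.
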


\begin{proof}
By the formula for $\phi^{(3)}$ in Example \ref{formulas} and by Lemma~\ref{twlambda}
we obtain that
$$
\phi^{(3)}(\rho(\chi))=
\tfrac{1}{6} \sum_{\lambda \in W(\chi)}
(\lambda^3 + 3\lambda(2)\lambda + 2\lambda(3))=
\tfrac{1}{6} \sum_{\lambda \in W(\chi)}
(\lambda^3 + 3\lambda(2)\lambda).
\qedhere
$$
\end{proof}

\begin{lem}\label{phi4}
For every $\chi\in \Lambda$ we have
$$
\phi^{(4)}(\rho(\chi))=
\tfrac{1}{24} \sum_{\lambda \in W(\chi)}
[\lambda^4 +6\lambda(2)\lambda^{2}+8\lambda(3)\lambda+3\lambda(2)^{2}].
$$
\end{lem}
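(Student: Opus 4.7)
The plan is to proceed exactly in parallel with the proofs of Corollary \ref{phi2} and Lemma \ref{phi3}: apply the closed formula for $4!\cdot\phi^{(4)}(e^\lambda)$ tabulated in Example \ref{formulas}, sum it over the orbit $W(\chi)$, and kill the one term that is a pure character of the form $\lambda(m)$ by invoking Lemma \ref{twlambda}.

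Concretely, the $i=4$ row of the table in Example \ref{formulas} gives
\[
24\cdot \phi^{(4)}(e^\lambda) \;=\; \lambda^{4} + 6\lambda(4) + 6\lambda(2)\lambda^{2} + 8\lambda(3)\lambda + 3\lambda(2)^{2}.
\]
Since $\phi^{(4)}$ is $\ZZ$-linear, I would sum this identity over $\lambda\in W(\chi)$ to obtain
\[
24\cdot \phi^{(4)}(\rho(\chi)) \;=\; \sum_{\lambda\in W(\chi)}\!\bigl[\lambda^{4} + 6\lambda(2)\lambda^{2} + 8\lambda(3)\lambda + 3\lambda(2)^{2}\bigr] \;+\; 6\!\!\sum_{\lambda\in W(\chi)}\!\!\lambda(4),
\]
and then apply Lemma \ref{twlambda} with $m=4$ to conclude that the last sum vanishes. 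Dividing by $24$ gives the claimed formula.

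There is no real obstacle: the only term that can be simplified via $\Lambda^W=0$ is the standalone $\lambda(4)$ term, because Lemma \ref{twlambda} only asserts the vanishing of the orbit sums $\sum_{\lambda\in W(\chi)}\lambda(m)$ of pure $m$-th power characters, not of the mixed products $\lambda(2)\lambda^2$, $\lambda(3)\lambda$, or $\lambda(2)^2$, which genuinely survive in the final expression. So the proof reduces to a one-line invocation of Example \ref{formulas} followed by a one-line invocation of Lemma \ref{twlambda}, exactly as in Lemma \ref{phi3}.
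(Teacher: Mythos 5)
Your proposal is correct and matches the paper's argument exactly: the paper's proof of Lemma~\ref{phi4} is the one-line remark ``It follows from Example~\ref{formulas} and Lemma~\ref{twlambda},'' which is precisely the computation you spell out (sum the $i=4$ formula over $W(\chi)$ and kill the $\lambda(4)$ term via Lemma~\ref{twlambda} with $m=4$). No gaps.
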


\begin{proof}
It follows from Example~\ref{formulas} and Lemma~\ref{twlambda}.
\end{proof}


\section{The Dynkin index}

In the present section we show that
the action of the Weyl group $W$ of a crystallographic root system $\Phi$ on the weight lattice $\Lambda$ has finite exponent in degree 2 
which coincides with the Dynkin index of the respective Lie algebra.

\smallbreak

Let $W$ be the Weyl group of a crystallographic root system $\Phi$ and let
$\Lambda$ be its weight lattice as defined in \cite[\S2.9]{Hum}.
Let $\{\omega_1,\ldots,\omega_n\}$ be a basis of $\Lambda$
consisting of fundamental weights (here $n$ is the rank of $\Phi$).

\smallbreak

The Weyl group $W$ acts on $\lambda\in \Lambda$
by means of simple reflections
$$
s_j(\lambda)=
\lambda - \langle \alpha_j^\vee,\lambda \rangle \cdot \alpha_j,\quad j=1\ldots n
$$
where $\alpha_j^\vee$ is the $j$-th simple coroot
and $\langle -,- \rangle$ is the usual pairing.
Note that $\langle \alpha_j^\vee,\omega_i\rangle = \delta_{ij}$,
where $\delta_{ij}$ is the Kronecker symbol.

\smallbreak

The subring of invariants $\ZZ[\Lambda]^{W}$ is the representation ring of
the respective Lie algebra $\g$.
By a theorem of Chevalley it is the polynomial ring in fundamental representations
$\rho(\omega_j)\in \zz[\Lambda]^W$, i.e.
$$
\zz[\Lambda]^W\simeq \zz[\rho(\omega_1),\ldots,\rho(\omega_n)].
$$
Observe that the dimension of the fundamental representation
$\rho(\omega_j)$ equals to the number of elements in the orbit
that is $\epsilon_m(\rho(\omega_j))$.

\smallbreak

Therefore, the ideal $I_m^W$ is generated by the elements
$\hat\rho(\omega_j)$, $j=1\ldots n$
and its image $\phi^{(i)}(I_m^W)$
is the $i$-th homogeneous component of the ideal
generated by $\phi^{(j)}(\rho(\omega_l))$, $1\le j\le i$, $l=1\ldots n$.

\begin{lem}\label{deg1}
We have $\Lambda^W=0$ and hence also
$$
\phi^{(1)}(\zz[\Lambda]^W)=\phi^{(1)}(I_m^W)=0.
$$
\end{lem}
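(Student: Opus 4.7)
The plan is to first verify $\Lambda^W = 0$ by a direct computation with simple reflections, and then derive the vanishing of $\phi^{(1)}$ on $W$-invariants from Lemma~\ref{twlambda} together with the degree-one row of Example~\ref{formulas}.

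For $\Lambda^W = 0$, I would expand an arbitrary $\lambda \in \Lambda^W$ in the basis of fundamental weights as $\lambda = \sum_{j=1}^n a_j \omega_j$. The simple reflection formula recorded at the start of Section~4 gives $s_j(\lambda) = \lambda - \langle \alpha_j^\vee, \lambda \rangle \alpha_j$, and the dual-basis relation $\langle \alpha_j^\vee, \omega_i \rangle = \delta_{ij}$ yields $\langle \alpha_j^\vee, \lambda \rangle = a_j$. The condition $s_j(\lambda) = \lambda$ for every $j$ then forces $a_j \alpha_j = 0$, hence $a_j = 0$, and therefore $\lambda = 0$.

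With $\Lambda^W = 0$ in hand, the essentiality hypothesis of Section~3 is met, so Lemma~\ref{twlambda} applies and delivers $\sum_{\lambda \in W(\chi)} \lambda = 0$ for every $\chi \in \Lambda$. Since $\phi^{(1)}(e^\lambda) = \lambda$ by Example~\ref{formulas}, this specialises to $\phi^{(1)}(\rho(\chi)) = 0$ for all $\chi$. Because $\ZZ[\Lambda]^W$ is spanned as a $\ZZ$-module by the orbit sums $\rho(\chi)$ (with $\rho(0) = 1$ accounting for the constant), linearity of $\phi^{(1)}$ yields $\phi^{(1)}(\ZZ[\Lambda]^W) = 0$. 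The explicit description of $\phi^{(i)}(I_m^W)$ recorded just before Definition~\ref{conjecture} specialises at $i=1$ to the $\ZZ$-span of the elements $\phi^{(1)}(\hat\rho(\chi)) = \phi^{(1)}(\rho(\chi))$, which accordingly vanishes.

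The argument is essentially routine; the only point requiring a moment of care is that $I_m^W$ is not a subset of $\ZZ[\Lambda]^W$ (its generators are $W$-invariant, but elements obtained by multiplying with arbitrary ring elements need not be), so the second vanishing must be read off from the formula for $\phi^{(1)}(I_m^W)$ rather than deduced from $\phi^{(1)}(\ZZ[\Lambda]^W) = 0$ by inclusion.
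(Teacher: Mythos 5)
Your proof is correct and takes essentially the same route as the paper: $\Lambda^W=0$ follows from the simple-reflection fixed-point condition together with the duality $\langle\alpha_j^\vee,\omega_i\rangle=\delta_{ij}$. The paper states the ``hence also'' part without further comment, and your justification via Lemma~\ref{twlambda} and the degree-one description of $\phi^{(1)}(I_m^W)$ is correct, including the useful remark that $I_m^W$ need not sit inside $\ZZ[\Lambda]^W$.
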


\begin{proof}
Let $\eta \in \Lambda^W$.
Since $\eta = s_{\al_j}(\eta) = \eta -\lan \eta, \al_j\ch\ran \al_j$
we have
$\lan \eta ,\al_j\ch\ran =\frac{2(\alpha_j,\eta)}{(\alpha_j,\alpha_j)} = 0$
for all simple roots $\al_j$ which implies that $\eta=0$.
\end{proof}

\begin{lem}
We have $S^2(\Lambda)^{W}=\langle q\rangle$.
\end{lem}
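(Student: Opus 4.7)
The plan is to reduce the statement to a rank-one calculation over $\QQ$, which will then follow from Schur's lemma applied to the reflection representation $V := \Lambda \otimes_\ZZ \RR$ of $W$. Since $\Lambda$ is a free $\ZZ$-module of finite rank, so is $S^2(\Lambda)$, and any subgroup of a finitely generated free abelian group is again free of some finite rank $r$. By flatness of base change, this rank equals
\begin{equation*}
r \;=\; \dim_\QQ \bigl( S^2(\Lambda)^W \otimes_\ZZ \QQ \bigr) \;=\; \dim_\RR S^2(V)^W,
\end{equation*}
so to conclude $S^2(\Lambda)^W = \langle q \rangle$ for some $q$ it suffices to show $\dim_\RR S^2(V)^W = 1$.

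Next, I would identify $S^2(V)^W$ with the space of $W$-invariant symmetric bilinear forms on $V^*$ or, after fixing any $W$-invariant positive definite inner product on $V$ (obtained by averaging), with the space of $W$-equivariant self-adjoint endomorphisms of $V$. Under the standing assumption that $\Phi$ is irreducible (implicit here since the paper relates the result to the simple Lie algebra $\g$ and its Dynkin index), the reflection representation $V$ is absolutely irreducible. Granting this, Schur's lemma over $\CC$ gives $\End_W(V \otimes_\RR \CC) = \CC$, hence $\End_W(V) = \RR$. Combined with the existence of at least one nonzero $W$-invariant quadratic form (the averaged inner product), this forces $\dim_\RR S^2(V)^W = 1$, completing the proof.

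The main obstacle is the absolute irreducibility of $V$, which is a classical fact about irreducible crystallographic root systems but deserves a short justification. I would argue as follows: if $V \otimes_\RR \CC = V_1 \oplus V_2$ as a nontrivial $W$-module decomposition, then applying a simple reflection $s_{\alpha_j}$ — which fixes the hyperplane $\alpha_j^\perp$ pointwise and acts by $-1$ on $\RR\alpha_j$ — forces each $\alpha_j$ to lie in either $V_1$ or $V_2$ and to be orthogonal to the other component. This partitions the set of simple roots into two mutually orthogonal subsets, contradicting the connectedness of the Dynkin diagram of the irreducible system $\Phi$. Note that the weaker fact $\Lambda^W = 0$ recorded in Lemma~\ref{deg1} is insufficient on its own (as $A_1 \times A_1$ shows), so irreducibility of $\Phi$ is genuinely used here.
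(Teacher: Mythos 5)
Your proof is correct, but it takes a genuinely different route from the paper's. The paper simply cites \cite[Prop.~4]{gross-nebe}, which asserts the existence (and essentially the uniqueness) of an integer-valued $W$-invariant quadratic form on $\Lambda$ normalized to take value $1$ on short coroots; the lemma then follows after identifying $S^2(\Lambda)^W$ with the group of integral $W$-invariant quadratic forms. That is, the paper delegates the entire content of the lemma to a single reference that already packages the rank-one statement together with a distinguished generator. You, by contrast, give a self-contained argument: reduce to computing $\dim_\RR S^2(V)^W$ via flat base change, translate that into $W$-equivariant self-adjoint endomorphisms of $V$ using an averaged invariant inner product, invoke Schur's lemma, and supply the needed absolute irreducibility of the reflection representation from the connectedness of the Dynkin diagram. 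Your version is more transparent and does not require knowing the Gross--Nebe result, and your remark that mere essentiality ($\Lambda^W = 0$) is insufficient — so irreducibility of $\Phi$ is genuinely used — is a worthwhile clarification that the paper leaves implicit. What the paper's route buys in exchange is brevity and the extra information that the generator $q$ can be taken to be the canonical normalized form (value $1$ on short coroots), which is exactly what makes the link to the Dynkin index in Theorem~\ref{lem:chev2} clean. One minor point you might state explicitly: once $S^2(\Lambda)^W$ is shown to be free of rank one, it is automatically of the form $\langle q\rangle$ for a single generator $q$; you imply this but do not say it in so many words.
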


\begin{proof}
By \cite[Prop.~4]{gross-nebe} there exists
an integer valued $W$-invariant quadratic form on $\Lambda$
which has value $1$ on short coroots.
As the group $S^2(\Lambda)^{W}$ is identical to the group
of all integral $W$-invariant quadratic forms on $T_{*}\otimes \mathbb{R}$,
the result follows.
\end{proof}

\begin{cor}\label{conj2}
The image $\phi^{(2)}(I_m^W)$
is a subgroup of $(I_a^W)^{(2)}$ of finite index.
\end{cor}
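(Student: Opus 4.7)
The plan is to observe that Corollary \ref{conj2} is an immediate consequence of the two lemmas that precede it, combined with the general fact established in Corollary \ref{phi2cor}. Specifically, Corollary \ref{phi2cor} says that whenever the $W$-action on $\Lambda$ is essential (i.e.\ $\Lambda^W = 0$) and $S^2(\Lambda)^W$ is cyclic with a single generator $q$, the image $\phi^{(2)}(I_m^W)$ sits inside $(I_a^W)^{(2)}$ as a subgroup of finite index, with exponent $\gcd_{\chi \in \Lambda} N_\chi$ for the integers $N_\chi$ from \eqref{def:N}.

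The first hypothesis is supplied by Lemma \ref{deg1}, which was just proved for the Weyl group $W$ of a crystallographic root system acting on the weight lattice $\Lambda$: any $W$-fixed $\eta \in \Lambda$ must have $\langle \eta, \alpha_j^\vee\rangle = 0$ for every simple root, hence $\eta = 0$. The second hypothesis is the content of the lemma immediately preceding the corollary, which identifies $S^2(\Lambda)^W$ with the group of integral $W$-invariant quadratic forms on $T_* \otimes \RR$ and invokes \cite[Prop.~4]{gross-nebe} to exhibit a $W$-invariant integral quadratic generator taking value $1$ on short coroots.

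Therefore the proof is essentially a one-line reference: since $\Lambda^W = 0$ and $S^2(\Lambda)^W = \langle q\rangle$, Corollary \ref{phi2cor} applies directly and yields that $\phi^{(2)}(I_m^W)$ has finite index in $(I_a^W)^{(2)}$. Since both hypotheses have been verified in this section, no further ingredient is required; there is no real obstacle to overcome, only the bookkeeping of assembling the pieces. It is worth noting, as motivation for the next results, that the resulting exponent $\tau_2 = \gcd_\chi N_\chi$ is exactly what the section is about to identify with the Dynkin index of the Lie algebra $\g$.
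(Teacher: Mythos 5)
Your proof is correct and follows exactly the paper's approach: the paper's own proof of Corollary \ref{conj2} is the one-liner ``This follows from Corollary~\ref{phi2cor} and Lemma~\ref{deg1},'' and you have simply spelled out the two hypotheses of Corollary \ref{phi2cor} and pointed to where each is verified (Lemma~\ref{deg1} for $\Lambda^W=0$, and the preceding lemma using \cite[Prop.~4]{gross-nebe} for $S^2(\Lambda)^W=\langle q\rangle$). If anything, your version is slightly more explicit than the paper's, since the paper does not cite the $S^2(\Lambda)^W=\langle q\rangle$ lemma by name in this proof, though it is clearly needed.
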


\begin{proof}
This follows from Corollary~\ref{phi2cor} and Lemma \ref{deg1}.
\end{proof}

We recall briefly the notion of indices of representations
introduced by Dynkin \cite[\S2]{Dynkin} (See also \cite{Braden}).

\smallbreak

Let
$f:\g\to \g'$ be a morphism between Lie
algebras. Then there exists a unique number $j_{f}\in \mathbb{C}$,
called the \emph{Dynkin index of $f$}, satisfying
\[ (f(x),f(y))=j_{f}(x,y),\]
for all $x,y\in \g$, where (--,--) is the Killing form on
$\g$ and $\g'$ normalized such that $(\alpha,\alpha)=2$ for any long root
$\alpha$.
In particular, if $f:\g\to\lsl(V)$ is a linear representation, $j_f$ is a positive
integer, called the \emph{Dynkin index of the linear representation
$f$}, defined by
\[\tr(f(x),f(y))=j_{f}(x,y).\]

\smallbreak

The \emph{Dynkin index of $\g$} is defined to be the greatest common
divisor of all the Dynkin indices of all linear representations of $\g$. By \cite[(2.24) and
(2.25)]{Dynkin}, the Dynkin index of $\g$ is the greatest common divisor
of the Dynkin index of its fundamental representations. Moreover,
all the Dynkin indices of the fundamental representations were
calculated in \cite[Table 5]{Dynkin}.

\smallbreak

Using the $\lsl_{2}$-representation theory, the Dynkin index of a linear
representation $f:\g\to \lsl(V)$ can be described as follows. Let $\alpha$ be a long root.
For the formal character ${\rm ch}(V) =
\sum_{\la} n_{\la} e^\la$, one has (see \cite[Lemma 2.4]{LaSo} or \cite[5.1 and
Lemma~5.2]{kumar-nara-rama})
\begin{equation}\label{Dynkinsl}
j_{f}=\frac{1}{2} \sum _{\la} \lan \la, \al\ch\ran^2.
\end{equation}

\begin{thm}\label{lem:chev2} The integers
$N(\om_j)$ for the $j$-th fundamental weight as defined in
\eqref{def:N} coincide with the Dynkin index of the fundamental
representation with highest weight $\om_j$. In particular, the
second exponent $\tau_2$ coincides with the Dynkin index of $\g$.
\end{thm}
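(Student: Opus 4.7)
The plan is to evaluate both sides of $\phi^{(2)}(\rho(\om_j))=N(\om_j)\cdot q$ at the coroot of a long root and match the result with the Dynkin formula \eqref{Dynkinsl}. The first step is to extend Corollary~\ref{phi2} beyond pure orbit sums to the full character of the $j$-th fundamental representation. Writing this character as $\rho(\om_j)=\sum_\la m_\la e^\la$ (with $m_\la$ the multiplicity of the weight $\la$), the multiset $\{(\la,m_\la)\}$ is $W$-stable; since $\La^W=0$ by Lemma~\ref{deg1}, the argument of Lemma~\ref{twlambda}, applied coordinate-wise in the basis $\om_1,\ldots,\om_n$, gives $\sum_\la m_\la \la(r)=0$ for every $r\ge 1$. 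The formula for $\phi^{(2)}$ in Example~\ref{formulas} then yields
\[
\phi^{(2)}(\rho(\om_j))=\tfrac12 \sum_\la m_\la\, \la^2 \;\in\; S^2(\La)^W=\ZZ q.
\]

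Let $j_{\om_j}$ denote the Dynkin index of the fundamental representation of highest weight $\om_j$, and fix a long root $\al$. Apply the evaluation homomorphism $S^2(\La)\to\ZZ$, $\mu\nu\mapsto\lan\mu,\al\ch\ran\lan\nu,\al\ch\ran$, to the identity $\phi^{(2)}(\rho(\om_j))=N(\om_j)\cdot q$. By \eqref{Dynkinsl} the left-hand side becomes $\tfrac12\sum_\la m_\la \lan\la,\al\ch\ran^2=j_{\om_j}$, while the right-hand side is $N(\om_j)\cdot q(\al\ch)$. It remains to show $q(\al\ch)=1$: since $\al$ is long, its coroot $\al\ch=2\al/(\al,\al)$ is \emph{short}, and by the Gross--Nebe normalization used in the lemma preceding Corollary~\ref{conj2}, $q$ is (up to sign) the unique integer-valued $W$-invariant quadratic form on $\La$ taking value $1$ on short coroots. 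This gives $N(\om_j)=j_{\om_j}$, which is the first assertion.

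The second assertion is then immediate. Since $\phi^{(1)}(\rho(\om_l))=0$ for all $l$ by Lemma~\ref{deg1}, the description of $\phi^{(2)}(I_m^W)$ given just before Definition~\ref{conjecture} identifies it with the $\ZZ$-subgroup of $\ZZ q\cong\ZZ$ generated by $N(\om_1),\ldots,N(\om_n)$, i.e.\ by $j_{\om_1},\ldots,j_{\om_n}$. Hence $\tau_2=\gcd_l j_{\om_l}$, which coincides with the Dynkin index of $\g$ by Dynkin's theorem that the latter is the gcd of the Dynkin indices of the fundamental representations. The most delicate point in the argument is the clean identity $q(\al\ch)=1$: it relies on the Gross--Nebe characterization of $q$ together with the duality sending a long root to a short coroot.
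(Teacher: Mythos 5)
Your approach is essentially the paper's: write $\phi^{(2)}(\rho(\om_j))$ as a $W$-invariant quadratic form, appeal to Dynkin's formula \eqref{Dynkinsl}, and evaluate against a long coroot. You make explicit two steps the paper only gestures at (``view $\phi^{(2)}(\rho(\om_j))$ as a function on $\frh_\ZZ$''), namely the evaluation homomorphism $S^2(\Lambda)\to\ZZ$ along $\al\ch\otimes\al\ch$ and, crucially, the normalization $q(\al\ch)=1$ for $\al$ long via the Gross--Nebe characterization (long root $\Leftrightarrow$ short coroot). That is a genuine clarification of the paper's argument.

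One caveat, which you share with the paper: in your first step you replace the orbit sum $\rho(\om_j)=\sum_{\la\in W(\om_j)}e^\la$ (which is what appears in \eqref{def:N}) by the full character $\sum_\la m_\la e^\la$ of $V_j$. These agree only for minuscule $\om_j$. The paper's line ``$V_j$ has character ${\rm ch}(V_j)=\sum_{\la\in W(\om_j)}e^\la$'' has the same issue: e.g., for $B_3$ and $\om_2$ (the highest root), $V_{\om_2}$ is the $21$-dimensional adjoint representation with Dynkin index $2h^\vee=10$, whereas the Weyl orbit of $\om_2$ consists of the $12$ long roots and its quadratic index is $\tfrac12\sum_{\be\,\text{long}}\lan\be,\al\ch\ran^2=8$. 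So the pointwise identity $N(\om_j)=j_{\om_j}$ does not follow from either argument as written when $V_j$ is not minuscule. The second assertion, $\tau_2=$ Dynkin index of $\g$, does survive: the orbit sums $\hat\rho(\om_j)$ and the reduced fundamental characters $\widehat{{\rm ch}}(V_j)$ are two $\ZZ$-generating families of $I_m^W$ related by a unipotent integral change of generators, so $\phi^{(2)}(I_m^W)$ has the same $\ZZ$-span either way and the two gcd's coincide. If you want a clean proof of the first assertion (or to see it patched for the non-minuscule cases), you would need to add this base-change observation rather than rely on $\rho(\om_j)={\rm ch}(V_j)$.
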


\begin{proof}
To find the precise value of $\tau_2$ we use the explicit formula
for $\phi^{(2)}$, that is
$$
\phi^{(2)}(\rho(\chi))=\tfrac{1}{2}\sum_{\lambda\in W(\chi)}
\lambda^2.
$$
We know  that $\tau_2$ is the greatest common
divisor of the integers $N_j =N_{\om_j}$ using the notation of the
proof of Corollary~\ref{phi2cor}, where  $\om_j$ is the $j$-th
fundamental weight of $\g$.
As the Dynkin index is the greatest common divisor of the Dynkin indices of
the fundamental representations $\om_j$, it suffices to show that
$N_j$ coincides with the Dynkin index of the representation $V_j$
corresponding to $\om_j$. We can view $\phi^{(2)}(\rho(\chi))$ for
$\chi=\om_j$ as a function on the lattice $\frh_\ZZ =
\Span_\ZZ\{\al\ch \mid \al \in \Phi \mbox{ long}\}$. Since $V_j$ has
character ${\rm ch}(V_j) = \sum_{\la \in W(\om_j)} e^\la$, by (\ref{Dynkinsl}) the Dynkin index of the
representation $V_j$ is $\tfrac{1}{2} \sum _{\la \in W(\om_j)} \lan
\la, \al\ch\ran^2$, where $\al$ is any long root in $\Phi$. Thus,
$\phi^{(2)}(\rho(\om_j))$ is the constant function with value $N_j$.
\end{proof}

We note that a different proof of Theorem~\ref{lem:chev2} was given in
\cite[\S2]{GaZa}.


\section{Exponents of degrees 3 and 4}

In the present section we show that $\tau_2=\tau_3=\tau_4$ 
for all crystallographic root systems

\smallbreak

Let $S=\{\la_1,\ldots,\la_r\}$ be a finite set of weights. 
We denote by $-S$ the set of opposite
weights $\{-\la_1,\ldots,-\la_r\}$, by $S_+$ the set of sums $\{\lambda_i+\lambda_j\}_{i<j}$,
by $S_-$ the set of differences $\{\lambda_i-\lambda_j\}_{i<j}$ and by $S_\pm$ the disjoint
union $S_+\amalg S_-$. By definition we have $|S_+|=|S_-|=\binom{r}{2}$.

Using the fact that $(\lambda+\lambda')(m)=\lambda(m)+\lambda'(m)$ for every $\lambda,\lambda'\in \Lambda$ and $m\ge 0$ we obtain the following lemma which will be extensively used in the computations

\begin{lem}\label{rest4}(i) For every integer $m_1, m_2, x, y\ge 0$ and a finite subset $S\subset\Lambda$ we have
$$
\sum_{\lambda\in S\amalg -S} \lambda(m_1)^x\lambda(m_2)^y = 
(1+(-1)^{x+y})\sum_{\lambda\in S}\lambda(m_1)^x\lambda(m_2)^y.
$$
In particular, $\sum_{\lambda \in S\amalg -S} \lambda(2)\lambda^2=0$.

(ii) For every subset $S\subset \Lambda$ with $|S|=r$ and for every $m_1,m_2\ge 0$ we have
$$
\sum_{\lambda\in S_+}\lambda(m_1)\lambda(m_2)=(r-1)\sum_{\lambda\in S} \lambda(m_1)\lambda(m_2) + \sum_{i\neq j} \lambda_i(m_1)\lambda_j(m_2)\text{ and }
$$
$$
\sum_{\lambda\in S_-}\lambda(m_1)\lambda(m_2)=(r-1)\sum_{\lambda\in S} \lambda(m_1)\lambda(m_2) - \sum_{i\neq j} \lambda_i(m_1)\lambda_j(m_2).
$$
In particular, this implies that $\sum_{\lambda\in S_\pm}\lambda(m_1)\lambda(m_2)=2(r-1)\sum_{\lambda\in S}\lambda(m_1)\lambda(m_2)$.
\end{lem}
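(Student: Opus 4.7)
Both identities are elementary computations, so the plan is to isolate the two mechanisms that make everything work: the $\ZZ$-linearity of the map $\lambda \mapsto \lambda(m)$, which is immediate from the defining formula $\lambda(m) = \sum_j a_{j,\lambda}\omega_j^m$ established in Example~\ref{formulas}, and a small combinatorial counting.

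For (i) I would first use linearity to observe that $(-\lambda)(m) = -\lambda(m)$. Splitting the sum over $S \amalg -S$ into its two halves then shows that the contribution coming from $-S$ picks up the sign $(-1)^{x+y}$, producing the overall factor $(1 + (-1)^{x+y})$ stated in the lemma. The ``in particular'' assertion is then just the special case $\lambda(2)\lambda^2 = \lambda(2)\lambda(1)^2$, corresponding to $(m_1,x,m_2,y)=(2,1,1,2)$, for which $x+y$ is odd and the factor vanishes.

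For (ii) I would again invoke linearity to write $(\lambda_i \pm \lambda_j)(m_k) = \lambda_i(m_k) \pm \lambda_j(m_k)$, expand each product
\[
(\lambda_i\pm\lambda_j)(m_1)\,(\lambda_i\pm\lambda_j)(m_2)
\]
into four terms, and sum over pairs $i<j$. The diagonal terms $\lambda_k(m_1)\lambda_k(m_2)$ arise once for each pair $(i,j)$ with $i<j$ containing the index $k$, and there are exactly $r-1$ such pairs (namely $k-1$ where $k$ appears as $j$ and $r-k$ where $k$ appears as $i$); this accounts for the coefficient $(r-1)$ in both displayed formulas, independently of the sign choice. The mixed terms assemble, with the appropriate sign, into the sum over ordered pairs $\sum_{i\neq j}\lambda_i(m_1)\lambda_j(m_2)$. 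Adding the two identities cancels the mixed contribution and yields the last assertion for $S_\pm = S_+ \amalg S_-$.

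No step should present a real obstacle: the only points requiring care are the counting that each $k \in \{1,\ldots,r\}$ lies in exactly $r-1$ unordered pairs, and the bookkeeping that the off-diagonal sum naturally runs over \emph{ordered} pairs $i\neq j$ rather than unordered pairs $i<j$.
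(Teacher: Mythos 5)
Your proof is correct, and it matches the paper's intent exactly: the paper gives no written proof of this lemma, merely remarking that it follows from the additivity property $(\lambda+\lambda')(m)=\lambda(m)+\lambda'(m)$, and your computation is precisely the routine expansion that sentence alludes to. The sign analysis in (i), the $r-1$ count for diagonal terms, and the assembly of the cross terms into the ordered-pair sum $\sum_{i\neq j}$ in (ii) are all carried out correctly.
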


\paragraph{\bf $A_n$-case}
Let $\Phi$ be of type $A_n$ for $n\geq 3$. We denote the canonical
basis of $\mathbb{R}^{n+1}$ by $e_{i}$ with $1\leq i \leq n+1$. 
According to  \cite[\S3.5 and \S3.12]{Hum} the basic polynomial invariants of 
the $W$-action on $\Lambda$
(algebraically independent homogeneous generators of $S^*(\Lambda)^{W}$ as a $\QQ$-algebra)
are given by the symmetric power sums
$$
q_{i}:=e_{1}^{i}+\cdots
+e_{n+1}^{i},\quad 2\leq i \leq n+1.
$$

\smallbreak

Let $s_i$ denote the $i$th elementary symmetric function in $e_1,\ldots,e_{n+1}$. Using
the classical identities
$$
q_1=s_1,\quad q_i=s_1q_{i-1}-s_2q_{i-2}+\ldots +(-1)^i s_{i-1}q_1+(-1)^{i+1}i\cdot s_i,\quad 1< i<n+1
$$
and the fact that $s_1=0$, we obtain that
$$
q_2/2=-s_2,\; q_3/3=s_3,\text{ and } q_4/2=s_2^2-2s_4.
$$
generate (with integral coefficients)
the ideal $I_a^W$ up to degree $4$.

\smallbreak

The fundamental weights of $\Phi$ can be expressed as follows 
$$
\omega_1=e_1,\;\omega_2=e_1+e_2,\;\;\ldots\;,\omega_{n-1}=e_1+\ldots +e_{n-1},\;\omega_n=-e_{n+1},
$$
where $e_1+e_2+\ldots +e_{n+1}=0$.
The orbits of $\omega_1$, $\omega_1+\omega_n$, $\omega_n$ and $\omega_2$, $\omega_{n-1}$
under the action of the Weyl group $W=S_{n+1}$ 
are given by 
$$
W(\omega_1)=\{e_1,\ldots,e_{n+1}\}=-W(\omega_n),\; 
W(\omega_1+\omega_n)=\{e_i-e_j\}_{i\neq j}\text{ and } 
$$
$$
W(\omega_2)=\{e_i+e_j\}_{i<j}=-W(\omega_{n-1}).$$
Therefore, $W(\omega_1+\omega_n)=S_-\amalg -S_-$ and $W(\omega_2)=S_+$, where $S=W(\omega_1)$.

\smallbreak

Applying Lemma~\ref{phi4} and Lemma~\ref{rest4} we obtain that
$$
\phi^{(4)}(\rho(\omega_1)+\rho(\omega_n))=
\tfrac{1}{12}\sum_{\lambda\in S}(\lambda^4+8\lambda(3)\lambda + 3\lambda(2)^2)\text{ and }
$$
$$
\phi^{(4)}(\rho(\omega_1+\omega_n)+\rho(\omega_2)+\rho(\omega_{n-1}))=
\tfrac{1}{24}\sum_{\lambda\in S_\pm \amalg -S_\pm}(\lambda^4+8\lambda(3)\lambda + 3\lambda(2)^2)=
$$
$$
=\tfrac{1}{24}\sum_{\lambda\in S_\pm \amalg -S_\pm}\lambda^4 + 
\tfrac{n}{6}\sum_{\lambda\in S}(8\lambda(3)\lambda + 3\lambda(2)^2).
$$
Then the difference
$$
\phi^{(4)}(\rho(\omega_1+\omega_n)+\rho(\omega_2)+\rho(\omega_{n-1}))-2n\cdot \phi^{(4)}(\rho(\omega_1)+\rho(\omega_n))=
$$
\begin{equation}\label{aneq}
=\tfrac{1}{24}\sum_{\lambda\in S_\pm \amalg -S_\pm}\lambda^4-\tfrac{n}{6}\sum_{\lambda\in S}\lambda^4=
\end{equation}
is a symmetric function in $e_1,\ldots, e_{n+1}$ and, therefore,
it can be always written as a polynomial in $q_i$s.
Indeed, since
$$
\sum_{\lambda\in S_\pm\amalg -S_\pm} \lambda^4=2\sum_{i<j}((e_i+e_j)^4+(e_i-e_j)^4)=4n\sum_{\lambda\in S} \lambda^4 + 24\sum_{i<j}e_i^2e_j^2,
$$
the difference \eqref{aneq} equals
$$=\sum_{i<j} e_i^2e_j^2=(q_2^2-q_4)/2.$$

\begin{lem}\label{Antype} For a root system of type $A_n$, $n\ge 2$, we have $\tau_2=\tau_3=\tau_4=1$.
\end{lem}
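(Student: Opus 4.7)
The plan is to verify $\tau_i = 1$ for each $i = 2, 3, 4$ by showing that every integral generator of $(I_a^W)^{(i)}$ already lies in $\phi^{(i)}(I_m^W)$. As the excerpt records, these generators are built from $q_2/2 = -s_2$, $q_3/3 = s_3$, and $q_4/2 = s_2^2 - 2s_4$ together with their $S^*(\Lambda)$-multiples. At each step I rely on the ideal description of $\phi^{(i)}(I_m^W)$ from Section~2, which promotes membership in $\phi^{(j)}(I_m^W)$ for $j \le i$ to membership of all $S^{i-j}(\Lambda)$-multiples in $\phi^{(i)}(I_m^W)$.

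First, $\tau_2 = 1$ is immediate from Theorem~\ref{lem:chev2}: the Dynkin index of the Lie algebra of type $A_n$ equals $1$, attained by the standard representation $V_{\omega_1}$ of $\mathfrak{sl}_{n+1}$. Hence $s_2 \in \phi^{(2)}(I_m^W)$, and by the ideal description this yields $s_2 \cdot S^{i-2}(\Lambda) \subseteq \phi^{(i)}(I_m^W)$ for $i = 3, 4$. For $\tau_3 = 1$, the only generator of $(I_a^W)^{(3)}$ not yet accounted for is $s_3$. Since $W(\omega_n) = -W(\omega_1)$, I would compute $\phi^{(3)}(\rho(\omega_1) - \rho(\omega_n))$ via Lemma~\ref{phi3}: the term $\lambda(2)\lambda$ is odd under $\lambda \mapsto -\lambda$ and cancels in the antisymmetric combination, while the orbit sum $\sum \lambda(3)$ vanishes by Lemma~\ref{twlambda}. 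What remains is
\[
\phi^{(3)}(\rho(\omega_1) - \rho(\omega_n)) = \tfrac{1}{3}\sum_{\lambda \in W(\omega_1)} \lambda^3 = \tfrac{q_3}{3} = s_3,
\]
so $s_3 \in \phi^{(3)}(I_m^W)$ and $\tau_3 = 1$.

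For $\tau_4 = 1$, the integral generators of $(I_a^W)^{(4)}$ are $q_4/2 = s_2^2 - 2s_4$, $s_3 \cdot S^1(\Lambda)$, and $s_2 \cdot S^2(\Lambda)$; the last two lie in $\phi^{(4)}(I_m^W)$ by the two previous cases. The identity already worked out in the excerpt places $(q_2^2 - q_4)/2 = s_2^2 + 2s_4$ in $\phi^{(4)}(I_m^W)$. Subtracting $s_2^2 = s_2 \cdot s_2 \in s_2 \cdot S^2(\Lambda) \subseteq \phi^{(4)}(I_m^W)$ gives $2s_4 \in \phi^{(4)}(I_m^W)$, and hence $q_4/2 = s_2^2 - 2s_4$ lies in $\phi^{(4)}(I_m^W)$ as a $\ZZ$-linear combination of elements already in place.

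The main conceptual subtlety is that the quartic identity supplied by the excerpt produces only the even multiple $2s_4$ of the elementary symmetric polynomial $s_4$. One does not need $s_4$ on its own, because the relevant integral generator of $(I_a^W)^{(4)}$ is $q_4/2 = s_2^2 - 2s_4$, which already incorporates the factor $2$ on $s_4$; recognising that the integral generators are the normalized $q_i$'s (rather than the $s_i$'s individually) is exactly what makes the odd component of $s_4$ irrelevant here.
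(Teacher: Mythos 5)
Your proposal is correct and follows essentially the same route as the paper: use $\phi^{(2)}(\rho(\omega_1)) = q_2/2$, the antisymmetric combination $\phi^{(3)}(\rho(\omega_1)-\rho(\omega_n)) = q_3/3$, and identity~\eqref{aneq} to place $q_4/2$ in the ideal; your rewriting $(q_2^2-q_4)/2 = s_2^2 + 2s_4$ and isolating $2s_4$ is the same linear-algebra step the paper performs. One small point: the paper derives \eqref{aneq} under the standing hypothesis $n \ge 3$ and treats $A_2$ separately (there $s_4 = 0$, so $q_4 = q_2^2/2$ and the degree-4 generator is already a multiple of $s_2$); your uniform use of \eqref{aneq} for all $n\ge 2$ does in fact go through, but you should either note that the derivation of \eqref{aneq} is valid for $n=2$ or dispose of $A_2$ directly as the paper does.
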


\begin{proof}
It is enough to show that the generators 
$q_2/2$, $q_3/3$ and $q_4/2$ are in the
ideal generated by the image of $\phi^{(i)}$, $i\le 4$.

\smallbreak

By Corollary~\ref{phi2} we have $\phi^{(2)}(\rho(\omega_1))=\tfrac{1}{2}\sum_{\lambda\in S} \lambda^2=q_2/2$.  By Lemma~\ref{phi3} we have $q_3/3=\phi^{(3)}(\rho(\omega_1))-\phi^{(3)}(\rho(\omega_n))$ (see also \cite[\S1C]{GaZa}).
If $\Phi$ is of type $A_2$, then $s_4=0$ and, hence, $q_4=q_2^2/2$. If $\Phi$ is of type $A_n$, $n\ge 3$, then by
\eqref{aneq} the generator
$q_4/2$ belongs to the ideal generated by the images of $\phi^{(2)}$ and $\phi^{(4)}$. 
\end{proof}

\begin{lem}\label{lem:chev3}
For any crystallographic root system $\Phi$ 
the third exponent $\tau_3$ of the $W$-action coincides with $\tau_2$ (the Dynkin index).
\end{lem}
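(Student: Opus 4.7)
The plan is to prove the two divisibilities separately. The inclusion $\tau_2 \mid \tau_3$ is immediate from the general observation $\tau_i \mid \tau_{i+1}$ recorded after Definition~\ref{conjecture}. The substantive content is the reverse divisibility $\tau_3 \mid \tau_2$, i.e., the inclusion
$$
\tau_2 \cdot (I_a^W)^{(3)} \subseteq \phi^{(3)}(I_m^W).
$$

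First I would dispose of the case where $\Phi$ is of type $A_n$ with $n \ge 2$: Lemma~\ref{Antype} already gives $\tau_2 = \tau_3 = 1$, so there is nothing to show. For every other irreducible crystallographic type (namely $A_1$, $B_n$, $C_n$, $D_n$ with $n\neq 3$, $E_6$, $E_7$, $E_8$, $F_4$, $G_2$), Chevalley's theorem produces no basic invariant of degree $3$. Since $S^3(\Lambda)^W$ is a torsion-free finitely generated abelian group of zero rational rank, this forces $S^3(\Lambda)^W = 0$. Combined with $S^1(\Lambda)^W = 0$ (Lemma~\ref{deg1}), it follows that
$$
(I_a^W)^{(3)} = \Lambda \cdot S^2(\Lambda)^W = \Lambda \cdot q,
$$
where $q$ is the generator of the rank-one group $S^2(\Lambda)^W$ established in Section~5.

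The final step exploits the explicit generators of $\phi^{(3)}(I_m^W)$. For each $\ell \in \Lambda$ and each $\chi\in \Lambda$, the element $\ell \cdot \phi^{(2)}(\hat\rho(\chi))$ belongs to $\phi^{(3)}(I_m^W)$. Using that $\phi^{(2)}$ annihilates constants and the identity $\phi^{(2)}(\rho(\chi)) = N_\chi\, q$ from the proof of Corollary~\ref{phi2cor}, this element equals $N_\chi\, \ell\, q$. Since $\tau_2 = \gcd_\chi N_\chi$, Bezout supplies integers $c_\chi$, almost all zero, with $\sum_\chi c_\chi N_\chi = \tau_2$, whence
$$
\tau_2\, \ell\, q \;=\; \sum_\chi c_\chi\, \bigl(\ell \cdot \phi^{(2)}(\hat\rho(\chi))\bigr) \;\in\; \phi^{(3)}(I_m^W)
$$
for every $\ell \in \Lambda$. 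This yields the required inclusion and, together with the case $A_n$, completes the proof.

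The main obstacle is the case-split based on the existence of a basic cubic invariant. Lemma~\ref{Antype} is needed precisely to deal with type $A_n$, since there $(I_a^W)^{(3)}$ strictly contains $\Lambda\cdot q$ and the uniform Bezout argument above fails to capture the cubic invariant $q_3$. Outside of type $A$ the argument is completely uniform: the vanishing of the degree-$3$ invariant collapses $(I_a^W)^{(3)}$ onto $\Lambda \cdot q$, and the divisibility then reads off directly from the definition of $\tau_2$ via the integers $N_\chi$.
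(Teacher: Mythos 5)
Your proof is correct and follows exactly the same case split as the paper: dispose of type $A_n$ via Lemma~\ref{Antype}, and for all other types invoke the absence of a degree-3 basic invariant. The paper states the conclusion of the second case in one line ("Therefore, $\tau_3=\tau_2$"); you have simply unpacked the implicit step — identifying $(I_a^W)^{(3)}$ with $\Lambda\cdot q$ using $S^1(\Lambda)^W=S^3(\Lambda)^W=0$ and then applying Bezout to the integers $N_\chi$ — which is precisely what the paper's argument amounts to.
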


\begin{proof}
If $\Phi$ is of type $A_n$, this follows from Lemma~\ref{Antype}; for the other types
there are no basic polynomial invariants of degree
$3$ \cite[\S3.7 Table 1]{Hum}. Therefore, $\tau_{3}=\tau_{2}$.
\end{proof}

\paragraph{\bf $B_n$, $C_n$ and $D_n$ cases}

Let $\Phi$ be of type $B_n$ or $C_n$ for $n\geq 2$ or of type $D_n$ for $n\ge 4$. 
We denote the canonical
basis of $\mathbb{R}^{n}$ by $e_{i}$ with $1\leq i \leq n$. 
By  \cite[\S3.5 and \S3.12]{Hum} the basic polynomial invariants of 
the $W$-action on $\Lambda$
are given by even power sums
$$
q_{2i}:=e_{1}^{2i}+\cdots
+e_{n}^{2i},\quad 1\leq i \leq n.
$$

\smallbreak

The
first two fundamental weights of $\Phi$ are given by
$\omega_1=e_1$, $\omega_2=e_1+e_2$ and their $W$-orbits are
$$
W(\omega_1)=\{\pm e_1,\ldots,\pm e_n\}\text{ and }W(\omega_2)=\{\pm e_i\pm e_j\}_{i<j}.
$$
Hence $W(\omega_1)=S\amalg -S$ and $W(\omega_2)=S_\pm\amalg -S_\pm$, where $S=\{e_1,\ldots,e_n\}$.

\smallbreak

Applying Lemma~\ref{phi4} and Lemma~\ref{rest4} we obtain that
$$
\phi^{(4)}(\rho(\omega_1))=\tfrac{1}{12}\sum_{\lambda\in S}\lambda^4 + 
\tfrac{1}{12}\sum_{\lambda\in S}(8\lambda(3)\lambda + 3\lambda(2)^2)\text { and }
$$
$$
\phi^{(4)}(\rho(\omega_2))=\tfrac{1}{24}\sum_{\lambda\in S_\pm\amalg -S_\pm}\lambda^4 + 
\tfrac{n-1}{6}\sum_{\lambda\in S} (8\lambda(3)\lambda + 3\lambda(2)^2).
$$
Then similar to the $A_n$-case we obtain
\begin{equation}\label{ortheq}
\phi^{(4)}(\rho(\omega_2))-2(n-1)\phi^{(4)}(\rho(\omega_1))=(q_2^2-q_4)/2,
\end{equation}
where $q_i=e_1^i+\ldots + e_n^i$.

\begin{lem}\label{Bntype} For a root system of type $B_n$ or $C_n$, $n\ge 2$ or $D_n$, $n\ge 4$ we have $\tau_4=\tau_2$.
\end{lem}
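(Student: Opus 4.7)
Since $\tau_2 \mid \tau_3 \mid \tau_4$ by Lemma~\ref{lem:chev3} and the monotonicity property noted after Definition~\ref{conjecture}, it suffices to establish the reverse divisibility $\tau_4 \mid \tau_2$, i.e.\
$\tau_2 \cdot (I_a^W)^{(4)} \subseteq \phi^{(4)}(I_m^W).$
The plan is to single out convenient $\ZZ$-module generators of $(I_a^W)^{(4)}$ and verify this containment on each one.

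First I would identify those generators. For $\Phi$ of one of the listed types there are no basic $W$-invariants of degree $3$, and (away from the $D_4$ case) the degree~$4$ invariants are integrally spanned by $q_2^2$ and $q_4$. Hence $(I_a^W)^{(4)}$ is generated as a $\ZZ$-module by the products $f\cdot q_2$ with $f$ ranging over a $\ZZ$-basis of $S^2(\Lambda)$, together with $q_4$.

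For the first family the containment is immediate from the definition of $\tau_2$: write $\tau_2\cdot q_2 = \sum_\chi c_\chi\,\phi^{(2)}(\hat\rho(\chi))$ with $c_\chi\in\ZZ$, and multiply by $f\in S^2(\Lambda)$. Each term $f\cdot \phi^{(2)}(\hat\rho(\chi))$ lies in $\phi^{(4)}(I_m^W)$ by the explicit description of that image recalled in Section~2, so $\tau_2\cdot f\cdot q_2\in \phi^{(4)}(I_m^W)$. In particular, taking $f=q_2$, $\tau_2\cdot q_2^2\in \phi^{(4)}(I_m^W)$. For the remaining generator $q_4$, the key input is the identity \eqref{ortheq} established just before the lemma, which exhibits $(q_2^2-q_4)/2$, and therefore also $q_2^2-q_4$, as an element of $\phi^{(4)}(I_m^W)$. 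Combining, $\tau_2\cdot q_4 = \tau_2\cdot q_2^2 - \tau_2\cdot(q_2^2-q_4)$ is displayed as a difference of elements of $\phi^{(4)}(I_m^W)$, completing the main argument.

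I expect the main obstacle to be the special case $D_4$, where the degrees of the basic invariants $(2,4,6,4)$ include an extra Pfaffian-type invariant of degree $4$ (proportional to $e_1 e_2 e_3 e_4$) which is not covered by the generating set $\{fq_2\}\cup\{q_4\}$ used above. To incorporate it one must supply an auxiliary element of $\phi^{(4)}(I_m^W)$ that hits this missing monomial; the natural source is one of the half-spin fundamental representations of $D_4$, whose weights $\tfrac{1}{2}(\pm e_1\pm e_2\pm e_3\pm e_4)$ are precisely those whose fourth powers, after $W$-averaging and the usual cancellations provided by Lemma~\ref{rest4}, contribute the required $e_1 e_2 e_3 e_4$ term (up to terms already controlled by $q_2^2$ and $q_4$). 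This reduces the $D_4$ case to the same subtraction scheme as above.
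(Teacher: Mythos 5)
Your core argument is the same as the paper's. The paper proves the lemma by exhibiting, via the identity \eqref{ortheq}, the combination $q_4/2=(q_2/2)\cdot \phi^{(2)}(\rho(\omega_1))-\phi^{(4)}(\rho(\omega_2))+2(n-1)\phi^{(4)}(\rho(\omega_1))$, which is exactly your subtraction scheme $\tau_2 q_4=\tau_2 q_2^2-\tau_2(q_2^2-q_4)$ together with the observation that multiples of $q_2$ by elements of $S^2(\Lambda)$ are already controlled by the degree-$2$ case; your bookkeeping with $\tau_2$ as a uniform multiplier is in fact slightly more careful about integrality than the paper's use of $q_2/2$ as a multiplier. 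The genuine divergence is your final paragraph: you are right that for $D_4$ the basic invariants are \emph{not} just the even power sums (the degree-$8$ power sum is replaced by the Pfaffian $e_1e_2e_3e_4$, which for $n=4$ lands in degree $4$), that this element lies in $(I_a^W)^{(4)}$, and that it is not in the ideal generated by $q_2$ and $q_4$ — so it is not covered by the subtraction scheme. The paper's proof passes over this case in silence, so you have identified a real omission rather than merely a cosmetic one. Your proposed repair via a half-spin orbit is the right idea and does work: a direct expansion gives $\sum_{\lambda\in W(\omega_4)}\lambda^4=\tfrac32 q_2^2-q_4+12\,e_1e_2e_3e_4$, and Lemma~\ref{rest4}(i) kills the odd terms $\lambda(2)\lambda^2$ and $\lambda(3)\lambda$ is handled as in the other computations, so $\phi^{(4)}(\rho(\omega_4))$ supplies the missing generator modulo elements already shown to lie in $\phi^{(4)}(I_m^W)$. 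As written, however, this step is only asserted ("up to terms already controlled"); to make the $D_4$ case complete you should carry out that expansion explicitly, since it is precisely the computation the paper does not do.
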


\begin{proof}
It is enough to show that $q_4/2$ is in the ideal generated by the image of $\phi^{(2)}$ and $\phi^{(4)}$. 

By Corollary~\ref{phi2} we have $\phi^{(2)}(\rho(\omega_1))=\sum_{\lambda\in S} \lambda^2=q_2$. Therefore, by \eqref{ortheq} 
$$q_4/2=(q_2/2)\cdot \phi^{(2)}(\rho(\omega_1))-\phi^{(4)}(\rho(\omega_2))+2(n-1)\phi^{(4)}(\rho(\omega_1)) $$
and the proof is finished.
\end{proof}

\begin{thm} For any crystallographic root system $\Phi$ we have $\tau_2=\tau_3=\tau_4$.
\end{thm}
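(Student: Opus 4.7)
By Lemma~\ref{lem:chev3} we already have $\tau_2=\tau_3$ for every crystallographic root system, and the divisibility $\tau_i\mid\tau_{i+1}$ noted after Definition~\ref{conjecture} gives $\tau_2\mid\tau_4$ unconditionally. Thus the theorem reduces to the inclusion $\tau_2\cdot(I_a^W)^{(4)}\subseteq\phi^{(4)}(I_m^W)$. Lemmas~\ref{Antype} and \ref{Bntype} establish this for the classical types, so only the exceptional types $G_2,\,F_4,\,E_6,\,E_7,\,E_8$ remain.

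The key structural fact for these types is that, by \cite[\S3.7 Table 1]{Hum}, none of them has a basic polynomial invariant of degree $3$ or $4$. Since $S^*(\Lambda)^W$ is torsion-free, this forces $S^3(\Lambda)^W=0$, while $S^4(\Lambda)^W$ is a free $\ZZ$-module of rank one lying inside $\QQ\cdot q_2^2$, where $q_2$ denotes the primitive generator of $S^2(\Lambda)^W$. Consequently,
\[
(I_a^W)^{(4)}\;=\;q_2\cdot S^2(\Lambda)\;+\;S^4(\Lambda)^W.
\]
The first summand is treated uniformly: Theorem~\ref{lem:chev2} gives $\tau_2 q_2\in\phi^{(2)}(I_m^W)$, hence $\tau_2\cdot q_2\cdot S^2(\Lambda)\subseteq S^2(\Lambda)\cdot\phi^{(2)}(I_m^W)\subseteq\phi^{(4)}(I_m^W)$.

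It therefore remains to show that $\tau_2\cdot S^4(\Lambda)^W\subseteq\phi^{(4)}(I_m^W)$; equivalently, that a fixed $\ZZ$-generator $q_4^{\ast}$ of the rank-one module $S^4(\Lambda)^W$ satisfies $\tau_2\,q_4^{\ast}\in\phi^{(4)}(I_m^W)$. My plan is a case-by-case verification in the spirit of the computation preceding Lemma~\ref{Bntype}: for each exceptional type, apply Lemma~\ref{phi4} together with the orbit-sum identities of Lemma~\ref{rest4} to evaluate $\phi^{(4)}(\rho(\omega_j))$ on a short list of fundamental weights, and then assemble an integer linear combination of these (possibly corrected by terms of the form $f\cdot\phi^{(2)}(\rho(\chi))$ with $f\in S^2(\Lambda)$) that realises $\tau_2\,q_4^{\ast}$.

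The main obstacle I anticipate is the bulk of the calculation, since the exceptional Weyl orbits can be large (e.g.\ $|W(\omega_1)|=27$ for $E_6$ and $|W(\omega_8)|=240$ for $E_8$). To keep things manageable I would exploit sub-root-system inclusions such as $G_2\supset A_2$, $F_4\supset B_4$, $E_6\supset A_5$, $E_7\supset D_6$, $E_8\supset D_8$: restricting a suitable fundamental weight of the ambient exceptional group and decomposing its orbit into sub-Weyl-orbits allows the identities already secured in Lemmas~\ref{Antype} and \ref{Bntype} to be reused, typically reducing the problem to a small residual term that can be cleared by one additional orbit computation.
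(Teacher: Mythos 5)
Your treatment of $\tau_2=\tau_3$ and of the classical types coincides with the paper's, and your structural analysis of $(I_a^W)^{(4)}$ for the exceptional types is a correct unpacking of the paper's terse justification ``there are no basic polynomial invariants of degree $3$ and $4$.'' However, the proposal as written is incomplete: you reduce the exceptional cases to the claim $\tau_2\,q_4^{\ast}\in\phi^{(4)}(I_m^W)$ for a $\ZZ$-generator $q_4^{\ast}$ of $S^4(\Lambda)^W$, and then only sketch a plan for a case-by-case orbit computation ($G_2,F_4,E_6,E_7,E_8$) without carrying it out. As it stands, the theorem is not proved for those types.

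The good news is that the missing step does not require any computation at all, and this is what the paper's one-line argument tacitly relies on. Because $S^2(\Lambda)^W$ is a \emph{saturated} rank-one submodule of $S^2(\Lambda)$ (an integral multiple of an element of $S^2(\Lambda)$ being $W$-invariant forces the element itself to be $W$-invariant), its generator $q_2$ is a primitive element of the free $\ZZ$-module $S^2(\Lambda)$, i.e.\ a degree-$2$ polynomial of content $1$ in $\ZZ[\omega_1,\ldots,\omega_n]$. By Gauss's lemma in the UFD $\ZZ[\omega_1,\ldots,\omega_n]$, the product $q_2^2$ then also has content $1$, hence is a primitive element of $S^4(\Lambda)$. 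Since $S^4(\Lambda)^W$ is likewise saturated and, for the exceptional types, of rank one containing $q_2^2$, it follows that $S^4(\Lambda)^W=\ZZ\, q_2^2$. Therefore your second summand is redundant:
\[
S^4(\Lambda)^W=\ZZ\, q_2^2\subseteq q_2\cdot S^2(\Lambda),
\qquad\text{so}\qquad
(I_a^W)^{(4)}=q_2\cdot S^2(\Lambda),
\]
and the inclusion $\tau_2\cdot(I_a^W)^{(4)}\subseteq\phi^{(4)}(I_m^W)$ follows immediately from the part of your argument you have already completed, namely $\tau_2 q_2\in\phi^{(2)}(I_m^W)$. Once this observation is made, the proposed sub-root-system and orbit computations for the exceptional types are unnecessary, and your proof matches the paper's.
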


\begin{proof}
The equality $\tau_2=\tau_3$ is proven in Lemma~\ref{lem:chev3}.
If $\Phi$ is of type $A_n$, $\tau_4=1$ follows from Lemma~\ref{Antype}.
If $\Phi$ is of type $B_n$, $C_n$ or $D_n$, $\tau_4=\tau_2$ follows from Lemma~\ref{Bntype}.
For all other types $\tau_{4}=\tau_{2}$ since there are no basic polynomial invariants of degree
$3$ and $4$ (see \cite[\S3.7 Table 1]{Hum}).
\end{proof}


\section{Torsion in the Grothendieck $\gamma$-filtration}

The goal of the present section is to provide geometric interpretation (see \eqref{cherndiag})
of the map $\phi_i$
and the exponents $\tau_i$.

\smallbreak

Let $G$ be a simple simply-connected Chevalley group over a field $k$.
We fix a maximal split torus $T$ of $G$ and a Borel subgroup $B\supset T$.
Let $\Lambda$ be the group of characters of $T$.
Since $G$ is simply-connected,
$\Lambda$ coincides with the weight lattice of $G$.

\smallbreak

Let $X$ denote the variety of Borel subgroups of $G$ (conjugate to $B$).
Consider the Chow ring $\CH^*(X)$
of algebraic cycles modulo rational equivalence
and the Grothendieck ring $K_0(X)$.
Following \cite[\S1]{De74} to every character $\lambda\in \Lambda$
we may associate the line bundle $\Lb(\lambda)$ over $X$.
It induces the ring homomorphisms (called the characteristic maps)
$$
\cc_a \colon S^*(\Lambda) \to \CH^*(X)
\text{ and }
\cc_m \colon  \ZZ[\Lambda]\twoheadrightarrow K_0(X)
$$
by sending $\lambda\mapsto c_1(\Lb(\lambda))$
and $e^\lambda\mapsto [\Lb(\lambda)]$ respectively.
Note that the map $\cc_a$ is an isomorphism in codimension one, hence, giving
$$
\cc_a\colon S^1(\Lambda)=\Lambda\stackrel{\simeq}\to Pic(X)=CH^1(X)
$$
and the map $\cc_m$ is surjective. Let $W$ be the Weyl group and let $I_a^W$
and $I_m^W$ denote the respective $W$-invariant ideals.
Then according to \cite[\S4~Cor.2,\S9]{De73} and \cite[\S6]{CaPeZa}
\begin{equation}\label{invar}
\ker\cc_m=I_m^W
\end{equation}
and $\ker\cc_a$ is generated by elements of $S^*(\Lambda)$ such that their multiples are in $I^W_a$.

\smallbreak

Consider the Grothendieck $\gamma$-filtration on $K_0(X)$ (see \cite[\S1]{GaZa}).
Its $i$th term is an ideal generated by products
$$
\gamma^i(X):=
\langle (1-[\Lb_1^\vee])(1-[\Lb_2^\vee])\cdot \ldots \cdot (1-[\Lb_i^\vee])\rangle,
$$
where $\Lb_1,\Lb_2,\ldots,\Lb_i$ are line bundles over $X$.
Consider the $i$th subsequent quotient $\gamma^i(X)/\gamma^{i+1}(X)$.
The usual Chern class $c_i$
induces a group homomorphism
$c_i\colon \gamma^i(X)/\gamma^{i+1}(X) \to CH^i(X)$.

\begin{prop}
For every $i\ge 0$ there is a commutative diagram of group homomorphisms
\begin{equation}\label{cherndiag}
\xymatrix{
I_m^i/I_m^{i+1} \ar[rr]^{(-1)^{i-1}(i-1)!\cdot \phi_i}\ar@{>>}[d]^{\cc_m} & & S^i(\Lambda)\ar[d]^{\cc_a} \\
\gamma^i (X)/\gamma^{i+1}(X) \ar[rr]^{c_i} & & \CH^i(X)
}
\end{equation}
\end{prop}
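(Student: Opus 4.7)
The plan is to verify the commutativity of the diagram on a generating set of $I_m^i/I_m^{i+1}$. By Lemma \ref{canmap} this abelian group is additively generated by (classes of) products $\xi = (1-e^{-\lambda_1})(1-e^{-\lambda_2})\cdots(1-e^{-\lambda_i})$ with $\lambda_j\in\Lambda$, and since both compositions in the diagram are additive, it is enough to check that they agree on each such $\xi$.

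Chasing $\xi$: the inverse formula in Lemma \ref{canmap} gives $\phi_i(\xi) = \lambda_1\cdots\lambda_i\in S^i(\Lambda)$, so the clockwise route produces $(-1)^{i-1}(i-1)!\,c_1(\Lb(\lambda_1))\cdots c_1(\Lb(\lambda_i))\in\CH^i(X)$. Going the other way, $\cc_m(\xi) = y := \prod_{j=1}^i (1-[\Lb(\lambda_j)^\vee])\in\gamma^i(X)$. Hence the proposition reduces to the universal identity
\[ c_i(y) \;=\; (-1)^{i-1}(i-1)!\, c_1(\Lb(\lambda_1))\cdots c_1(\Lb(\lambda_i)) \quad\text{in } \CH^i(X), \]
which is a statement about $i$ arbitrary line bundles on $X$.

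To prove this identity I would expand $y$ as a formal alternating sum of line bundles, $y = \sum_{S\subseteq\{1,\ldots,i\}}(-1)^{|S|}\prod_{k\in S}[\Lb(\lambda_k)^\vee]$, and apply Whitney multiplicativity of the total Chern class on sums in $K_0$ to obtain
\[ c_t(y) \;=\; \prod_{S\subseteq\{1,\ldots,i\}}(1-\sigma_S t)^{(-1)^{|S|}}, \qquad \sigma_S := \sum_{k\in S} c_1(\Lb(\lambda_k)). \]
Taking logarithms gives $\log c_t(y) = -\sum_{m\ge 1}\tfrac{t^m}{m}\sum_{S}(-1)^{|S|}\sigma_S^m$. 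A standard inclusion--exclusion argument collapses the inner sum: after expanding $\sigma_S^m$, only multi-indices whose set of indices exhausts all of $\{1,\ldots,i\}$ survive the alternating sum over $S$, which forces $m\ge i$; and for $m=i$ exactly the $i!$ permutations of $\{1,\ldots,i\}$ contribute, producing $(-1)^i\cdot i!\cdot c_1(\Lb(\lambda_1))\cdots c_1(\Lb(\lambda_i))$. Exponentiating delivers the claimed formula for $c_i(y)$ and, as a byproduct, $c_k(y)=0$ for $k<i$, confirming along the way that $c_i$ descends to a well-defined map on $\gamma^i/\gamma^{i+1}$.

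The main obstacle is integrality. A naive approach via Newton's identities expresses $c_i(y) = (-1)^{i-1}(i-1)!\,\mathrm{ch}_i(y)$ only after inverting $i!$, so it yields the claim merely rationally. The logarithmic inclusion--exclusion computation above bypasses every division and produces the integer coefficient $(-1)^{i-1}(i-1)!$ directly, which is precisely what the statement of the proposition requires.
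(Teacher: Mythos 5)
Your argument is correct and follows the same overall route as the paper's proof: identify generators of $I_m^i/I_m^{i+1}$ of the form $(1-e^{-\lambda_1})\cdots(1-e^{-\lambda_i})$, use $\gamma^i(X)=\cc_m(I_m^i)$, and reduce commutativity to the single Chern-class identity $c_i\bigl((1-[\Lb_1^\vee])\cdots(1-[\Lb_i^\vee])\bigr)=(-1)^{i-1}(i-1)!\,c_1(\Lb_1)\cdots c_1(\Lb_i)$. The paper states this identity without derivation; you supply one, expanding the product in $K_0$, applying Whitney multiplicativity, passing to $\log c_t$, and collapsing the alternating sum by inclusion--exclusion, which also shows $c_k(y)=0$ for $k<i$ and hence that $c_i$ descends to $\gamma^i/\gamma^{i+1}$. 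Your computation is sound; the one small integrality point worth spelling out is that although intermediate steps involve the rational coefficients $1/m$, the vanishing of the coefficients of $t^j$ for $j<i$ forces $c_t(y)=1+O(t^i)$, so the degree-$i$ coefficient of $\log c_t(y)$ literally equals $c_i(y)$, and the final coefficient $(-1)^{i-1}(i-1)!$ is indeed integral. In short: same reduction as the paper, plus a self-contained proof of the key identity the paper takes as given.
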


\begin{proof}
Indeed, the $\gamma$-filtration on $K_0(X)$
is the image of the $I_m$-adic filtration on $\ZZ[\Lambda]$, i.e.
$\gamma^i(X)=\cc_m(I_m^i)$ for every $i\ge 0$. The Proposition then follows from the identity
$$
c_i\Big((1-[\Lb_1^\vee])(1-[\Lb_2^\vee]) \ldots (1-[\Lb_i^\vee])\Big)
=(-1)^{i-1}(i-1)!\cdot c_1(\Lb_1)c_1(\Lb_2) \ldots c_1(\Lb_i),
$$
where $\Lb_1,\Lb_2,\ldots,\Lb_i$ are line bundles over $X$ and $\Lb_i^\vee$ denotes the dual of $\Lb_i$.
\end{proof}

\begin{rem}
Note that $\ZZ[\Lambda]$ can be identitfied
with the $T$-equivariant $K_0$ of a point $pt=Spec\; k$ and $S^*(\Lambda)$
with the $T$-equivariant $CH$ of a point (see \cite{GiZa}).
The maps $\cc_a$ and $\cc_m$ then can be identified
with the pull-backs $K_T(pt)\to K_T(G)$ and $CH_T(pt)\to CH_T(G)$
induced by the structure map $G\to pt$.

\smallbreak

In view of these identifications the map $\phi_i$
can be viewed as an equivariant analogue of the Chern class map $c_i$.
\end{rem}

Consider the diagram ~\eqref{cherndiag} with $\mathbb{Q}$-coefficients.
In this case the Chern class map $c_i$ will become an isomorphism (by the Riemann-Roch theorem), the characteristic map $\cc_a$ will turn into a surjection and the map $(-1)^{i-1}(i-1)!\cdot \phi_i$ will
be an isomorphism as well.
In view of \eqref{invar} we obtain an isomorphism 
$$\phi^{(i)}\otimes\mathbb{Q}\colon I_m^W\cap I_m^i/I_m^W\cap I_m^{i+1}\otimes\mathbb{Q} \longrightarrow (I_a^W)^{(i)}\otimes\mathbb{Q}$$ 
on the kernels of $\cc_m$ and $\cc_a$.
By the very definition of the exponents $\tau_i$ this implies that

\begin{cor} The action of the Weyl group of a crystallograhic root system has finite exponent $\tau_i$ for every $i$.
\end{cor}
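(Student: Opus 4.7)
The plan is to turn the rational isomorphism
\[
\phi^{(i)}\otimes\QQ \colon \bigl(I_m^W \cap I_m^i\bigr)/\bigl(I_m^W \cap I_m^{i+1}\bigr)\otimes\QQ \;\longrightarrow\; (I_a^W)^{(i)}\otimes\QQ
\]
established in the paragraph preceding the statement into the integral divisibility relation $N_i\cdot (I_a^W)^{(i)} \subseteq \phi^{(i)}(I_m^W)$ required by Definition~\ref{conjecture}. The key additional input is a finite generation argument: since $\Lambda$ is a free abelian group of finite rank $n$, the symmetric power $S^i(\Lambda)$ is a free abelian group of finite rank $\binom{n+i-1}{i}$, and therefore any subgroup of $S^i(\Lambda)$ is itself finitely generated and torsion free.

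First I would record the two subgroups $A := \phi^{(i)}(I_m^W)$ and $B := (I_a^W)^{(i)}$ of $S^i(\Lambda)$, and rephrase the rational isomorphism displayed above as the single statement $A \otimes\QQ = B \otimes\QQ$ as $\QQ$-subspaces of $S^i(\Lambda)\otimes\QQ$. Indeed, on the kernel side, $\ker\cc_m = I_m^W$ by \eqref{invar}, so the kernel of $\cc_m$ restricted to $I_m^i/I_m^{i+1}$ identifies, via the second isomorphism theorem, with $(I_m^W\cap I_m^i)/(I_m^W\cap I_m^{i+1})$; on the other side, rationally $\ker(\cc_a\otimes\QQ) = I_a^W\otimes\QQ$ because by the description of $\ker\cc_a$ recalled before \eqref{invar} every element is annihilated by some integer times an element of $I_a^W$. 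Combined with the rational isomorphism $(-1)^{i-1}(i-1)!\cdot\phi_i\otimes\QQ$ and the diagram \eqref{cherndiag}, this yields $A\otimes\QQ = B\otimes\QQ$ inside $S^i(\Lambda)\otimes\QQ$.

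The remaining step is a standard commensurability argument. Pick a finite generating set $b_1,\ldots,b_k$ for $B$, which exists by the finite generation noted above. Each $b_j \otimes 1$ lies in $B\otimes\QQ = A\otimes\QQ$, so there is a positive integer $N_j$ with $N_j b_j \in A$. Taking $N_i = \mathrm{lcm}(N_1,\ldots,N_k)$ yields $N_i\cdot B \subseteq A$, which is exactly the defining inclusion $N_i\cdot (I_a^W)^{(i)} \subseteq \phi^{(i)}(I_m^W)$. Hence the $W$-action has finite exponent in degree $i$, and $\tau_i$ divides this $N_i$.

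The only subtle point is ensuring that the kernel identifications on both sides of the diagram are compatible with passage to the degree-$i$ piece; I would handle this by taking the associated graded of the $I$-adic filtrations, where Lemma~\ref{canmap} guarantees $\phi_i$ restricted to $I_m^i/I_m^{i+1}$ lands in $I_a^i/I_a^{i+1}=S^i(\Lambda)$ in a canonical, $W$-equivariant way. With that in hand, the argument is otherwise a direct translation of a rational isomorphism of finitely generated abelian groups into an integral bound on the cokernel.
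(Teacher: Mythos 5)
Your argument is correct and is essentially the same as the paper's: the paper obtains the same rational isomorphism on kernels from the Chern class diagram and then says the corollary follows ``by the very definition of the exponents $\tau_i$,'' leaving the integrality step implicit. You simply spell out that implicit step, namely the standard commensurability argument (finite generation of $(I_a^W)^{(i)}$ as a subgroup of the free abelian group $S^i(\Lambda)$ of finite rank, plus clearing denominators and taking an lcm) that converts a rational containment $(I_a^W)^{(i)}\otimes\QQ\subseteq\phi^{(i)}(I_m^W)\otimes\QQ$ into an integral bound $N_i\cdot (I_a^W)^{(i)}\subseteq\phi^{(i)}(I_m^W)$. One minor remark: for the definition of finite exponent you only need the single containment $(I_a^W)^{(i)}\otimes\QQ\subseteq\phi^{(i)}(I_m^W)\otimes\QQ$, which already follows from the fact that the displayed rational map is surjective and factors through $\phi^{(i)}(I_m^W)\otimes\QQ$; the full equality $A\otimes\QQ=B\otimes\QQ$ is not needed, though it does hold.
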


We are now ready to prove the main result of this section  

\begin{thm}\label{thm:torsion}
The integer $\tau_i \cdot (i-1)!$ annihilates the torsion of the $i$th subsequent
quotient
$\gamma^{i}(X)/\gamma^{i+1}(X)$ of the $\gamma$-filtration on $K_0(X)$ for $i=3,4$.
\end{thm}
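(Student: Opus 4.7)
Let $t \in \gamma^i(X)/\gamma^{i+1}(X)$ be a torsion element with $Nt = 0$. The first step is to lift $t$ via the surjection $\bar\cc_m \colon I_m^i/I_m^{i+1} \twoheadrightarrow \gamma^i(X)/\gamma^{i+1}(X)$ (whose kernel equals $(I_m^W \cap I_m^i + I_m^{i+1})/I_m^{i+1}$ by \eqref{invar}) to an element $\bar t$, and then to transport the problem via the isomorphism $\phi^{(i)} \colon I_m^i/I_m^{i+1} \xrightarrow{\sim} S^i(\Lambda)$ of Lemma~\ref{canmap}. Setting $s := \phi^{(i)}(\bar t)$, this identifies $\gamma^i(X)/\gamma^{i+1}(X)$ with $S^i(\Lambda)/J$, where $J := \phi^{(i)}(I_m^W \cap I_m^i)$; the condition $Nt=0$ reads $Ns \in J$, and the claim reduces to showing $(i-1)!\tau_i \cdot s \in J$.

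By the commutative diagram~(\ref{cherndiag}) together with \eqref{invar}, $J \subseteq \phi^{(i)}(I_m^W) \subseteq \ker \cc_a^{(i)}$, and $\ker\cc_a^{(i)}$ is the saturation of $(I_a^W)^{(i)}$ in $S^i(\Lambda)$. Thus $s$ lies in this saturation, so some multiple $Ms$ is in $(I_a^W)^{(i)}$. The definition of the exponent gives $M \tau_i \cdot s \in \phi^{(i)}(I_m^W)$. Combining this with the inclusion $(i-1)! \cdot \phi^{(i)}(I_m^W) \subseteq J$ discussed below produces $(i-1)!\tau_i M s \in J$; the auxiliary multiplier $M$ is then eliminated by observing that the resulting uniform bound $(i-1)!\tau_i$ controls the exponent of the finite group $\ker\cc_a^{(i)}/J$ of which the torsion $\widetilde J / J$ of $S^i(\Lambda)/J$ is a subgroup.

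The crucial inclusion $(i-1)!\phi^{(i)}(I_m^W) \subseteq J$ is verified generator-by-generator on elements $f\cdot \phi^{(j)}(\hat\rho(\chi))$ of $\phi^{(i)}(I_m^W)$. For $j \le 2$, the generator already lies in $J$ without need of $(i-1)!$: lift $f \in S^{i-j}(\Lambda)$ to $g \in I_m^{i-j}$, and since $\hat\rho(\chi) \in I_m^2$ in the essential case (Lemma~\ref{deg1}), the product $g\hat\rho(\chi) \in I_m^W \cap I_m^i$ has $\phi^{(i)}(g\hat\rho(\chi)) = f \cdot \phi^{(j)}(\hat\rho(\chi))$ by the graded multiplicativity of $\phi_*$ (Lemma~\ref{canmap}). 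The residual top-degree generators $\phi^{(i)}(\hat\rho(\chi))$, for which $\hat\rho(\chi)$ does not itself lie in $I_m^i$, are where the factor $(i-1)!$ is genuinely needed: for type $A_n$ one uses the identity $q_3/3 = \phi^{(3)}(\rho(\omega_1)) - \phi^{(3)}(\rho(\omega_n))$ of Lemma~\ref{Antype} combined with $\hat\rho(\omega_1) - \hat\rho(\omega_n) \in I_m^W \cap I_m^3$ (whose vanishing modulo $I_m^3$ follows from $N_1 = N_n$); for types $B_n$, $C_n$, $D_n$ one uses the analogous identity for $q_4/2$ in the proof of Lemma~\ref{Bntype}; and for the exceptional types there are no basic polynomial invariants in degrees $3$ or $4$, so nothing further is required. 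This final step — realizing each $\phi^{(i)}(\hat\rho(\chi))$, after multiplication by $(i-1)!$, as a $\phi^{(i)}$-image of an element of $I_m^W \cap I_m^i$ via the case-by-case integral identities of Section~5 — is the main obstacle, and the reason the theorem is restricted to degrees $i = 3, 4$.
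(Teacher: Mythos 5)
The proposal takes a genuinely different — and considerably more involved — route than the paper, and it has real gaps. The paper's own proof is short and direct: a torsion class $\alpha$ satisfies $c_i(\alpha)=0$ because $\CH^i(X)$ is torsion-free; the commutative diagram \eqref{cherndiag} together with the cited identity $\ker(\cc_a)^{(i)}=(I_a^W)^{(i)}$ for $i\le 4$ (from \cite[\S1B,\S1C]{GaZa} — \emph{this} is the step that forces the restriction to $i=3,4$) gives $(i-1)!\,\phi_i(\tilde\alpha)\in(I_a^W)^{(i)}$; the definition of $\tau_i$ then produces $\beta\in I_m^W$ with $\phi_i(\beta)=\tau_i(i-1)!\,\phi_i(\tilde\alpha)$; and one finishes by applying $\phi_i^{-1}$ and then $\cc_m$.

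Two concrete problems with your argument. First, you never invoke the integral equality $\ker(\cc_a)^{(i)}=(I_a^W)^{(i)}$ for $i\le 4$; you use only the general (rational) statement that $\ker\cc_a^{(i)}$ is the saturation of $(I_a^W)^{(i)}$. This is exactly what creates the auxiliary multiplier $M$ (depending on $s$), and the ``elimination of $M$'' is circular: the claim that $(i-1)!\tau_i$ is a \emph{uniform} bound on the exponent of $\ker\cc_a^{(i)}/J$ is precisely what is to be shown, and knowing only that $(i-1)!\tau_i M_s\cdot s\in J$ for some $s$-dependent $M_s$ does not establish it. Once the GaZa identity is in hand, $M$ can simply be taken to be $1$, which is what the paper does; without it your argument cannot close.

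Second, the ``crucial inclusion'' $(i-1)!\,\phi^{(i)}(I_m^W)\subseteq J$ is only partially verified. The generators of $\phi^{(i)}(I_m^W)$ are all $f\cdot\phi^{(j)}(\hat\rho(\chi))$ for $2\le j\le i$ and $f\in S^{i-j}(\Lambda)$; you handle $j=2$ correctly, and you sketch $j=i$ for a couple of types, but the intermediate case $j=3$ with $f\in S^1(\Lambda)$ at $i=4$ is missing entirely, and even the ``top-degree'' case is not actually shown to work after multiplication by $(i-1)!$ for an arbitrary $\chi$. In addition, the intermediate containment $\phi^{(i)}(I_m^W)\subseteq\ker\cc_a^{(i)}$ that you assert is not justified by the diagram: \eqref{cherndiag} directly yields only $(i-1)!\,J\subseteq\ker\cc_a^{(i)}$, and $\phi^{(j)}(\hat\rho(\chi))$ for $j\ge 3$ is in general not $W$-invariant, so it is not obviously in $\ker\cc_a$. (That containment is in fact unnecessary for your step — $J\subseteq\ker\cc_a^{(i)}$ follows from $(i-1)!\,J\subseteq\ker\cc_a^{(i)}$ plus saturatedness — but stating it as if it were evident is misleading.) Overall the structural idea of passing to $S^i(\Lambda)/J$ is fine, but the proof as written does not go through.
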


\begin{rem} Note that by \cite[Expos\'e XIV, 4.5]{SGA6} for groups of types $A_n$ and $C_n$ the quotients $\gamma^{i}(X)/\gamma^{i+1}(X)$ have no torsion.
\end{rem}

\begin{proof}
Assume that $\alpha$ is a torsion element in
$\gamma^{i}(X)/\gamma^{i+1}(X)$. Then $c_i(\alpha)=0$ since $CH^i(G/B)$ has
no torsion. Let $\tilde\alpha$ be a preimage of $\alpha$ via $\cc_m$  in
$I_m^i/I_m^{i+1}\subseteq \ZZ[\Lambda]/I_m^{i+1}$. By the same analysis as in \cite[\S1B, \S1C]{GaZa} one can show that
$\ker(\cc_a)^{(i)}=(I_a^W)^{(i)}$ for $i\le 4$. By \eqref{cherndiag} we obtain that
$$
(i-1)!\,\phi_i(\tilde \alpha) \in (I_a^W)^{(i)}
$$
By definition of the index $\tau_i$ we
have
$$
\tau_i \cdot (i-1)!\, \phi_{i}(\tilde\alpha)=\phi_i(\beta), \text{
where } \beta\in I_m^W/I_m^{i+1}\cap I_m^W.
$$
Applying $\phi_i^{-1}$ to the both sides we obtain
$$
\tau_i \cdot (i-1)! \cdot \tilde\alpha = \beta \in I_m^W/I_m^{i+1}\cap I_m^W
$$
Applying $\cc_m$ to the both sides
and observing that $I_m^W=\ker \cc_m$ we obtain that
$\tau_i\cdot (i-1)!\cdot \alpha =0$.
\end{proof}

Let ${}_\xi X$ be a twisted form of the variety $X$ by means of a cocycle $\xi\in Z^1(k,G)$.
By \cite[Thm. 2.2.(2)]{Panin} the restriction map $K_0({}_\xi X) \to K_0(X)$
(here we identify $K_0(X)$ with the $K_0(X\times_k \bar k)$ over the algebraic closure $\bar k$)
is an isomorphism. Since the characteristic classes commute with restrictions, this induces
an isomorphism between the $\gamma$-filtrations, i.e. $\gamma^i({}_\xi X) \simeq \gamma^i(X)$ for every $i\ge 0$,
and between the respective quotients
$$
\gamma^i({}_\xi X)/\gamma^{i+1}({}_\xi X)\simeq \gamma^i(X)/\gamma^{i+1}(X)\quad\text{ for every }i\ge 0.
$$

In view of this fact Theorem~\ref{thm:torsion}  imply that

\begin{cor}\label{gtor} Let $G$ be a split simple simply connected group of type $B_n$ ($n\ge 3$) or $D_n$ ($n\ge 4$). Then
for every $\xi \in Z^1(k,G)$ the torsion in
$\gamma^4({}_\xi X)/\gamma^5({}_\xi X)$ is annihilated by $12$.
\end{cor}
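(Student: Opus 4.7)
The plan is to chain together the structural results already established. First, by the isomorphism $\gamma^i({}_\xi X)/\gamma^{i+1}({}_\xi X) \simeq \gamma^i(X)/\gamma^{i+1}(X)$ noted just before the statement (coming from \cite[Thm.~2.2.(2)]{Panin}), it suffices to bound the torsion of $\gamma^4(X)/\gamma^5(X)$ for the split form $X$. Applying Theorem~\ref{thm:torsion} at $i=4$, this torsion is annihilated by $\tau_4\cdot 3!=6\tau_4$, so the corollary reduces to showing that $\tau_4=2$ for the two families of root systems in question.

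Second, I would invoke the main theorem at the end of Section~5, which gives $\tau_4=\tau_2$ for every crystallographic root system, together with Theorem~\ref{lem:chev2}, which identifies $\tau_2$ with the Dynkin index of the simple Lie algebra $\g$ of $G$. Hence the corollary collapses to the single arithmetic fact that the Dynkin index of $\g$ equals $2$ when $\Phi$ is of type $B_n$ ($n\geq 3$) or $D_n$ ($n\geq 4$).

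For this last step I would apply formula \eqref{Dynkinsl} directly. The vector representation $V(\om_1)$ has weights $\pm e_i$ (plus a zero weight in the $B_n$ case), and evaluating \eqref{Dynkinsl} at a long coroot such as $\alpha\ch=e_1-e_2$ gives Dynkin index exactly $2$, so the gcd of the fundamental Dynkin indices divides $2$. To rule out the value $1$, it suffices to observe that every other fundamental representation has an even Dynkin index: for the exterior powers of $V(\om_1)$ this is elementary, while for the spin representation of $B_n$ or the half-spin representations of $D_n$, whose weights have the shape $\tfrac{1}{2}(\pm e_1\pm\cdots\pm e_n)$, a direct computation via \eqref{Dynkinsl} yields indices $2^{n-2}$ and $2^{n-3}$ respectively, both at least $2$ in our ranges. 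Alternatively one may simply cite \cite[Table~5]{Dynkin}. The only mildly delicate point is therefore the lower bound $j_\g\geq 2$, which is tightest precisely for $B_3$ and $D_4$ where the $2$-adic valuation of the (half-)spin index is smallest; once this single weight-sum computation is carried out, $\tau_4\cdot 3!=12$ annihilates the torsion and the corollary follows.
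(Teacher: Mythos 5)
Your proposal is correct and follows the paper's own chain of reasoning exactly: reduce to the split case via Panin's isomorphism, apply Theorem~\ref{thm:torsion} at $i=4$ to get the bound $6\tau_4$, use $\tau_4=\tau_2=$ Dynkin index, and conclude via the well-known fact that the Dynkin index of $B_n$ ($n\ge 3$) and $D_n$ ($n\ge 4$) is $2$. The paper leaves the Dynkin-index computation implicit (citing \cite[Table~5]{Dynkin} earlier in the text), whereas you verify it by hand; that extra detail is correct, including the point that the (half-)spin index $2^{n-2}$ resp.\ $2^{n-3}$ drops to $1$ precisely outside the stated rank ranges, which explains the hypotheses $n\ge 3$ and $n\ge 4$.
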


Consider the topological filtration on $K_0(Y)$ given by the ideals
$$
\tau^i(Y):=\langle [\mathcal{O}_V]\mid V\hookrightarrow Y,\, codim_V Y\ge i \rangle.
$$
It is known (see \cite[\S2]{GaZa}) that $\gamma^i(Y)\subseteq \tau^i(Y)$ for every $i\ge 0$.

\begin{cor}\label{cor:Bn4Dn4CH4}
In the notation of Corollary~\ref{gtor} assume in addition that the induced map
$$\gamma^4({}_\xi X)/\gamma^5({}_\xi X)\to \tau^4({}_\xi X)/\tau^5({}_\xi X)$$
is surjective. Then the $2$-torsion of $\CH^{4}({}_\xi X)$ is annihilated
by $8$.
\end{cor}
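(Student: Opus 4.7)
The plan is to transfer the $12$-annihilation of the torsion of $\gamma^4/\gamma^5$ (from Corollary~\ref{gtor}) to the torsion of $\tau^4/\tau^5$ via the surjectivity hypothesis, and then to combine this with Grothendieck's Riemann--Roch without denominators, which says that the natural surjection $\beta\colon \CH^i({}_\xi X)\twoheadrightarrow \tau^i({}_\xi X)/\tau^{i+1}({}_\xi X)$, $[V]\mapsto[\mathcal{O}_V]$, has kernel annihilated by $(i-1)!$; for $i=4$ this gives $6$.

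First I would show that the torsion subgroup of $\tau^4/\tau^5$ is annihilated by $12$. The Chern character identifies $K_0({}_\xi X)\otimes\QQ$ with $\CH^*({}_\xi X)\otimes\QQ$ and matches both the $\gamma$- and the topological filtrations with the codimension filtration. Hence the comparison map $\nu\colon \gamma^4/\gamma^5\to \tau^4/\tau^5$ induced by $\gamma^4\subseteq\tau^4$ is an isomorphism after $\otimes\QQ$, so $\ker\nu$ is torsion and sits inside the $12$-annihilated torsion subgroup of $\gamma^4/\gamma^5$. Given the surjectivity of $\nu$, any torsion $\bar\gamma\in\tau^4/\tau^5$ lifts to some $\tilde\gamma\in\gamma^4/\gamma^5$; a suitable multiple of $\tilde\gamma$ lies in $\ker\nu$, so $\tilde\gamma$ is itself torsion, hence $12\tilde\gamma=0$, and therefore $12\bar\gamma=0$.

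Now I would take a $2$-primary torsion class $\alpha\in\CH^4({}_\xi X)$ (the intended reading of ``$2$-torsion''), say $2^m\alpha=0$, and chase it through $\beta$. Its image $\bar\alpha=\beta(\alpha)$ is both $2$-primary and $12$-annihilated, hence $4\bar\alpha=0$ (since $\gcd(12,2^m)\mid 4$). Then $\beta(4\alpha)=0$, so $4\alpha\in\ker\beta$ is $6$-annihilated; thus $24\alpha=0$. Combining with $2^m\alpha=0$ yields $\gcd(24,2^m)\alpha=0$, and since $\gcd(24,2^m)\mid 8$, we conclude $8\alpha=0$.

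The main external input is Grothendieck's Riemann--Roch without denominators supplying the $6$-annihilation of $\ker\beta$; the subtlety is the delicate bookkeeping by which the two annihilators ($12$ and $6$) conspire to produce exactly $8$, using the $2$-primary hypothesis to strip the odd factors at each stage. Everything else reduces to a diagram chase and elementary arithmetic on top of the annihilator $12$ furnished by Corollary~\ref{gtor}.
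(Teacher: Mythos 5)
Your proof is correct and follows essentially the same route as the paper: transfer the $12$-annihilator from $\gamma^4/\gamma^5$ to $\tau^4/\tau^5$ via the surjectivity hypothesis (using that the comparison map has torsion kernel), invoke Riemann--Roch without denominators to control $\ker\beta$ by $6=(4-1)!$, and finish with elementary $2$-adic bookkeeping. The only cosmetic difference is that you strip the odd factor before applying the $6$-annihilation (obtaining $24\alpha=0$), whereas the paper first concludes $72\alpha=0$ and then extracts the $2$-part; both arrive at $8$, and you helpfully spell out the intermediate step (torsion of $\tau^4/\tau^5$ annihilated by $12$) that the paper leaves implicit.
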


\begin{proof}
 By the Riemann-Roch theorem \cite[Ex.15.3.6]{Fulton}, the composition
\[\CH^{4}({}_\xi X)\twoheadrightarrow  \tau^4({}_\xi X)/\tau^5({}_\xi X)
   \stackrel{c_{4}}\to \CH^{4}({}_\xi X)\]
is the multiplication by $(-1)^{4-1}(4-1)!=-6$, where the first map
is surjective. Hence, the torsion subgroup of
$\CH^{4}({}_\xi X)$ is annihilated by $72$ and so the result
follows.
\end{proof}

\section{`The Dynkin index' in the $H_2$ case}

 Note that the notion of an exponent $\tau_i$ can be defined over a unique factorisation domain in the same way. As an example we compute the second exponent $\tau_2$ for the action of the Weyl group of of the non-crystallographic root system $H_2$ over the base ring $\ZZ[\tfrac{1+\sqrt{5}}{2}]$, hence, giving rise to an interesting question about its geometric/Lie algebra interpretation.

\begin{thm}\label{lem:H2}
For the non-crystallographic root system $H_{2}:=I_{2}(5)$, the
second exponent $\tau_2$ is $\sqrt{5}$.
\end{thm}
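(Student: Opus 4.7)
My plan is to adapt the method of Section~4 to the base ring $R=\ZZ[\vphi]$, where $\vphi = \tfrac{1+\sqrt{5}}{2}$, and to compute $\tau_2$ by a direct analysis of $W$-orbits on the $H_2$ weight lattice. First I would view $\Lambda$ as the free $R$-module $R\omega_1\oplus R\omega_2$ with the $W$-action through the $H_2$ Cartan matrix $\bigl(\begin{smallmatrix}2 & -\vphi\\-\vphi & 2\end{smallmatrix}\bigr)$: explicitly $s_1(\omega_1) = -\omega_1+\vphi\omega_2$, $s_2(\omega_2) = \vphi\omega_1-\omega_2$, and each $s_j$ fixes the other fundamental weight. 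Since the action is essential, $\Lambda^W=0$, so Corollary~\ref{phi2} carries over and gives $\phi^{(2)}(\rho(\chi)) = \tfrac{1}{2}\sum_{\lambda\in W(\chi)}\lambda^2 \in S^2(\Lambda)^W$. Solving the invariance equations on a general $a\omega_1^2 + b\omega_1\omega_2 + c\omega_2^2$ (using $\vphi^2 = \vphi + 1$), I would show that $S^2(\Lambda)^W = R\cdot q$ with $q = \omega_1^2 - \vphi\omega_1\omega_2 + \omega_2^2$; by definition $\tau_2$ is then the gcd in $R$ of the coefficients $N_\chi$ determined by $\phi^{(2)}(\rho(\chi)) = N_\chi q$.

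The key arithmetic facts driving the proof are the identities
$$2+\vphi \;=\; \vphi\sqrt{5}\qquad\text{and}\qquad 1+3\vphi \;=\; \vphi^2\sqrt{5}$$
in $R$, both reflecting the total ramification of $5$. Since $\vphi$ is a unit (with $\vphi^{-1} = \vphi-1$), each left-hand element is an associate of $\sqrt{5}$. With these in hand I would compute $\phi^{(2)}(\rho(\omega_1))$. The orbit is of size five since $\omega_1$ lies on the wall fixed by $s_2$:
$$W(\omega_1) = \{\omega_1,\ -\omega_1+\vphi\omega_2,\ -\vphi\omega_1+\omega_2,\ -\omega_2,\ \vphi\omega_1-\vphi\omega_2\},$$
and summing squares yields $\phi^{(2)}(\rho(\omega_1)) = (2+\vphi)\,q = \vphi\sqrt{5}\,q$. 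This gives the upper bound $\tau_2\mid\sqrt{5}$.

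For the reverse, I would show $\sqrt{5}\mid N_\chi$ for every $\chi\in\Lambda$ by splitting on orbit size. Wall weights $\chi = c\omega_j$ reduce to the fundamental case via scaling: $N_{c\omega_j}= c^2\vphi\sqrt{5}$. For a generic weight $\chi = m\omega_1+n\omega_2$ with $mn\neq 0$, the orbit has size $10$, decomposing as the five rotations $(s_1 s_2)^k\chi$ together with their $s_1$-images; a direct summation (the longest step of the proof) produces
$$\phi^{(2)}(\rho(\chi)) \;=\; 2\bigl[(2+\vphi)(m^2+n^2) + (1+3\vphi)\,mn\bigr]\, q \;=\; 2\vphi\sqrt{5}\bigl[(m^2+n^2) + \vphi\,mn\bigr]\, q,$$
so $\sqrt{5}\mid N_\chi$ in this case as well. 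Combining both cases, $\tau_2 = \sqrt{5}$ up to a unit in $R$, hence $\tau_2=\sqrt{5}$.

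The main obstacle is the careful bookkeeping for the generic $10$-element orbit. Once the two key identities $2+\vphi=\vphi\sqrt{5}$ and $1+3\vphi=\vphi^2\sqrt{5}$ are recognized, the whole computation collapses to the clean statement that every $N_\chi$ is divisible by the unique prime above $5$ in $R$, while $N_{\omega_1}$ is itself an associate of that prime.
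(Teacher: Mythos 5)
Your approach is correct and is essentially the one the paper takes: identify $S^2(\Lambda)^W$ as the free rank-one $\ZZ[\vphi]$-module $Rq$ with $q=\omega_1^2-\vphi\omega_1\omega_2+\omega_2^2$, compute $\phi^{(2)}(\rho(\chi))=N_\chi q$, and take the gcd of the $N_\chi$. Your computations check out: $\phi^{(2)}(\rho(\omega_1))=(2+\vphi)q=\vphi\sqrt 5\,q$, and for a size-$10$ orbit one does get $N_\chi=2\vphi\sqrt{5}\bigl[(m^2+n^2)+\vphi\,mn\bigr]$, both associates of (or divisible by) $\sqrt{5}$.

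In fact you are \emph{more} thorough than the paper. The paper computes only $N_{\omega_1}=N_{\omega_2}$ and then reads off $\tau_2=\gcd(1+\vphi^2,2\vphi+\vphi^2)$, which implicitly relies on the fact that $I_m^W$ is generated by $\hat\rho(\omega_1),\hat\rho(\omega_2)$ alone (a Chevalley-type statement over $\ZZ[\vphi]$ that is not spelled out for $H_2$). You instead verify divisibility of $N_\chi$ by $\sqrt 5$ for \emph{every} $\chi$, which makes the argument self-contained: the upper bound from $N_{\omega_1}$ and the lower bound from the universal divisibility together pin down $\tau_2=\sqrt 5$ up to units.

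One small inaccuracy to fix: it is not true that $mn\neq0$ forces $|W(\chi)|=10$. The dihedral group of order $10$ has five reflection axes, and only two of them are the coordinate lines $R\omega_1$, $R\omega_2$; the other three (fixed by $rs_1$, $r^2s_1$, $r^3s_1$, where $r=s_1s_2$) are $R(\omega_1-\vphi\omega_2)$, $R(\omega_1-\omega_2)$, $R(\omega_1+(1-\vphi)\omega_2)$, all of which contain lattice points with $mn\neq0$ and orbit size $5$. This does not break the proof: every wall weight is $W$-conjugate to $c\omega_1$ or $c\omega_2$, so its $N_\chi$ is $c^2\vphi\sqrt5$ and still divisible by $\sqrt 5$; equivalently, on those walls the generic formula overcounts by exactly a factor of $2$, which does not affect $\sqrt5$-divisibility. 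But the case split should be stated as ``wall weights (those $W$-conjugate to a multiple of a fundamental weight)'' versus ``regular weights (trivial stabilizer)'', rather than by the condition $mn\neq 0$.
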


\begin{proof}
We follow the notations in \cite{CMP}. In the root system $H_{2}$,
the Weyl group $W$ is the dihedral group of order $10$ and $M$ is
the $ \ZZ[\tau]$-lattice generated by two simple roots $\alpha_1$ and
$\alpha_{2}$, where $\tau=(1+\sqrt{5})/2$.
Observe that $\ZZ[\tau]$ is an Euclidean domain.

\smallbreak

The dual basis $\{\omega_1, \omega_2\}$ is defined by
\begin{equation*}
\left\{
\begin{array}{l}
\omega_{1}=\frac{1}{3-\tau}(2\alpha_1+\tau\alpha_2)\\
\omega_{2}=\frac{1}{3-\tau}(\tau\alpha_1+2\alpha_2)\\
\end{array} \right.
\text{ or } \left\{
\begin{array}{l}
\alpha_{1}=2\omega_{1}-\tau\omega_2\\
\alpha_{2}=-\tau\omega_{1}+2\omega_{2}\\
\end{array} \right.
\end{equation*}
One computes the orbits of $\omega_1$ and $\omega_2$ as follows:
\begin{align*}
W(\omega_1)&=\{\omega_1, -\omega_{2}, -\omega_{1}+\tau\omega_{2}, -\tau\omega_{1}+\omega_{2}, \tau\omega_{1}-\tau\omega_{2}\},\\
W(\omega_2)&=-W(\omega_1).
\end{align*}
As the action of $W$ on $M$ is essential, by Corollary \ref{phi2},
we have
\begin{align}\label{tau2:H21}
\phi^{(2)}(\rho(\omega_{2}))=\phi^{(2)}(\rho(\omega_{1}))&=\frac{1}{2}(\omega_1^{2}+\omega_2^{2}+(\omega_1-\tau\omega_2)^{2}+(\tau\omega_1-\omega_2)^{2}+(\tau\omega_1-\tau\omega_2)^{2})\nonumber\\
&=(1+\tau^2)\omega_{1}^{2}+(1+\tau^2)\omega_{2}^{2}-(2\tau+\tau^2)\omega_{1}\omega_{2}.
\end{align}
Since $\phi^{(2)}(\rho(\omega_2))$ is $W$-invariant by
Corollary~\ref{phi2}, we have
$$
\tau_2=\gcd(1+\tau^2,2\tau+\tau^2)=\gcd(2+\tau,2\tau-1).
$$
But $2\tau-1=\sqrt{5}$ is a prime in $ \ZZ[\tau]$, and we have
$2+\tau=(2\tau-1)\tau$ proving that
$\tau_2=\sqrt{5}$.
\end{proof}

\bibliographystyle{plain}

\end{document}